\theoremstyle{definition}
\newtheorem{thrm}{Theorem}
\newtheorem{dfntn}{Definition}
\newtheorem{lmm}{Lemma}
\theoremstyle{plain}
\newtheorem{cond}[thrm]{Condition}
\newtheorem{rmrk}[thrm]{Remark}
\numberwithin{equation}{section}
\newcommand{\N}{{\mathbb{N}}}
\newcommand\R{\mathbb{R}}
\newcommand\Ru{{\mathbb{R} \cup \left\{\infty\right\}}}
\newcommand\Rd{{\mathbb{R}^d}}
\newcommand{\X}{{\bm{X}}}
\newcommand{\bd}{{\bm{d}}}
\newcommand{\Sd}{{\mathbb{S}^d}}
\newcommand{\prb}{\mathcal{P}_2(\Omega)}
\newcommand{\wass}{\mathbf{W}_2}
\newcommand{\pgen}{\mathcal{P}_2}
\newcommand{\bmu}{\boldsymbol{\mu}}
\newcommand{\mf}{\mathcal{M}}
\newcommand{\g}{\mathbf{g}}
\newcommand{\smf}{\mathcal{N}}
\newcommand{\sg}{\mathbf{h}}
\newcommand{\cut}{\operatorname{cut}}
\newcommand{\hess}{\operatorname{Hess}}
\newcommand{\scurv}{\mathfrak{S}}
\newcommand{\minus}{\scalebox{0.4}[0.9]{$-$}}
\newcommand{\dd}{\,\mathrm{d}}
\newcommand\ninN{{n\in\mathbb{N}}}
\newcommand\kinN{{k\in\mathbb{N}}}
\newcommand\ntoinf{{n\rightarrow \infty}}
\newcommand\dar{{\ \stackrel{\bd}{\rightarrow}} \ }
\newcommand\uk{u^k_{\tau}}
\newcommand\ukm{u^{k-1}_{\tau}}
\newcommand\ukzm{u^{k-2}_{\tau}}
\newcommand{\dom}{\mathcal{D}(\mathcal{E})}
\newcommand{\nrg}{\mathcal{E}}
\renewcommand{\tilde}{\widetilde}
\renewcommand{\hat}{\widehat}
\renewcommand{\bar}{\overline}
\begin{document}
\title{A Variational Formulation of the BDF2 Method for Metric Gradient Flows}\thanks{This research was supported by the DFG Collaborative Research Center TRR 109, `Discretization in Geometry and Dynamics'.}
\author{Daniel Matthes}\address{Zentrum f\"ur Mathematik, Technische Universit\"at M\"unchen, 85747 Garching, Germany}
\author{Simon Plazotta}\address{Zentrum f\"ur Mathematik, Technische Universit\"at M\"unchen, 85747 Garching, Germany}
\date{\today}
\begin{abstract} 
  We propose a variational form of the BDF2 method as an alternative to the commonly used minimizing movement scheme for the time-discrete approximation of gradient flows in abstract metric spaces.
   Assuming uniform semi-convexity --- but no smoothness --- of the augmented energy functional, we prove well-posedness of the method and convergence of the discrete approximations to a curve of steepest descent.
    In a smooth Hilbertian setting, classical theory would predict a convergence order of two in time, we prove convergence order of one-half in the general metric setting and under our weak hypotheses. 
    Further, we illustrate these results with numerical experiments for gradient flows on a compact Riemannian manifold, in a Hilbert space, and in the $L^2$-Wasserstein metric.
\end{abstract}
\subjclass[2010]{34G25, 35A15, 35G25, 35K46, 65L06, 65J08}
\keywords{gradient flow, second order scheme, BDF2, multistep discretization, minimizing movements, parabolic equations, nonlinear diffusion equations}
\maketitle
\section{Introduction}\label{sec:Intro}
\subsection{The Main Idea in Short}
\label{sct:introtrivial}
This article is concerned with a particular temporal discretization for gradient flows in metric spaces.
That is, we study the approximation of curves of steepest descent 
in the energy landscape of a functional $\nrg:\X\to\Ru$ with respect to a metric $\bd$ on $\X$.
Before we elaborate on our motivation and results,
we briefly outline the concept in the simplest setting, 
namely when $\X=\Rd$, $\bd$ is the Euclidean metric, and $\nrg\in C^\infty(\Rd)$, 
in which case the problem amounts to approximate solutions to
\begin{align}
  \label{eq:simple}
  \dot u = -\nabla\nrg(u).
\end{align}
Assuming in addition that $\nrg$ is uniformly semi-convex, 
i.e., $\nabla^2\nrg\ge\lambda\mathbf{1}_d$ for some $\lambda\in\R$,
then it follows that the implicit Euler method with any sufficiently small uniform time step $\tau>0$,
\begin{align}
  \label{eq:bdf1}
  \frac{u_\tau^k-u_\tau^{k-1}}{\tau} = -\nabla\nrg(u_\tau^k),
\end{align}
is well-defined, i.e., the initial condition $u_\tau^0$ determines the entire sequence $(u_\tau^k)_{k\in\N}$ uniquely.
It is further well-known that this is a first order approximation of the true solution $u$ to \eqref{eq:simple},
i.e., $u_\tau^k=u(k\tau)+O(\tau)$ as $\tau\to0$.
Our point is that under the same semi-convexity hypothesis,
also the second order Backward Differentiation Formula (BDF2) method, 
\begin{align}
  \label{eq:bdf2}
  \frac{3u_\tau^k-4u_\tau^{k-1}+u_\tau^{k-2}}{2\tau} = -\nabla\nrg(u_\tau^k),
\end{align}
is well-defined and convergent.
The strength of the BDF2 method in comparison to the implicit Euler scheme is that the former
--- at least in the smooth setting at hand --- converges to second order in $\tau$. \\

Studies on the BDF2 scheme in the above mentioned ODE setting have been an active topic in the 1950's and 1960's \cite{dahlquist1956convergence,dahlquist1963special}.
Subsequently, the method in the Hilbertian setup, where $\X$ is a Hilbert space and $\bd$ is induced by the norm, 
has attracted a lot of attention.
Convergence results, particularly for very general nonlinear right-hand sides, are much more recent, 
see e.g.
\cite{akrivis2004linearly,akrivis2015fully,Baiocchi,bramble1989incomplete,emmrich2005stability,emmrich2009convergence,emmrich2009two,hansen2006convergence,kreth1978time,le1979semidiscretization,Thomee}.
Still, the analysis appears to be more or less complete now, at least under reasonable conditions on the nonlinearity. On the other hand,
we are apparently the first to analyze (a variational formulation of) the BDF2 method
\emph{for approximation of gradient flows in abstract metric spaces},
and to prove its convergence just under the hypothesis of semi-convexity.
Our proof is different from the one in ODE text books \cite{BD,Gear,Hairer2013},
also from the ones typically given in the Hilbertian setting, like in \cite{emmrich2005stability}.
The key difference is that due to the possible ``roughness'' of the metric space $\X$,
there is no appropriate notion of smooth solution for the gradient flow
(in general, there does not even exist a good definition for the differentiability of a curve).
Hence, we cannot invoke error estimates that rely on Taylor expansions around the limiting solution.
Instead of applying the usual error estimates between discrete solutions for the same time step $\tau$, 
we need to resort to Cauchy-type estimates for solutions at different time steps as in \cite[Section 4.1]{ags}. 
This yields a control on the global approximation error, which is not of order two, but only of order one-half.
We also provide an example to show that indeed, even for specific, seemingly harmless choices of $(\X,\bd)$ and $\nrg$,
convergence takes place at first order only. 
On the other hand, in view of the results in \cite[Section 4.4]{ags} on the implicit Euler method,
it seems likely that our variational BDF2 converges to first order in general.
Currently, we are not able to close the apparent gap between order one-half and order one,
mainly because the calculations for BDF2 are much more complex than the corresponding ones in \cite[Section 4.4]{ags}.
And according to our general philosophy, that we describe below, 
any further investigations in the direction of improving the rate beyond one-half appear rather pointless.

We emphasize that the proven slow convergence order one-half does not contradict our initial intention 
of providing a method of faster convergence than the implicit Euler one.
Indeed, \emph{if} the approximated solution is smooth enough 
(which, in specific situations, can often be verified a posteriori by considering it in a different setting),
\emph{then} the classical convergence proofs from text books apply and yield the desired rate of order two.
That philosophy is justified by a series of numerical experiments that all show second order convergence.
\emph{Our contribution is that 
  --- regardless of the regularity of the limiting solution under consideration ---
  convergence of the method is guaranteed, even with an explicit rate.
  And our proof utilizes solely the variational structure of the scheme \eqref{eq:bdf2} and the semi-convexity hypothesis on $\nrg$.
}

We remark that other, conceptually different approaches to construct time discretizations with formally higher order have been investigated, for instance, variational formulations of Runge-Kutta methods \cite{LEGENDRE2017345,laguzet:hal-01404619}. 
However, there is no analytically proven rate of convergence available so far, not even order one-half. 

\subsection{Metric Gradient Flows}
For definiteness, we are working inside the abstract framework developed in the first part of the book \cite{ags};
only a few basic notations from that comprehensive theory will be relevant to us, 
and these are summarized in Section \ref{sct:ags} below. 
Although our considerations are very general, we have three specific settings in mind.
The first is that of gradient flows for smooth functions on a finite dimensional compact manifold,
the second concerns uniformly semi-convex functionals on Hilbert spaces,
and in the third, we consider flows for uniformly displacement semi-convex functionals
on the space $\X=\prb$ of probability measures with respect to the $L^2$-Wasserstein distance $\bd=\wass$.

Particularly the third setting has gained a lot of popularity in the past two decades, see \cite{santambrogio2015optimal,villani,villani2008optimal} for a general introduction to the $L^2$-Wasserstein space $(\prb,\wass)$ and see \cite{MCCANN1997153} for the concept of displacement convexity.
It has been shown that a variety of important nonlinear dissipative evolution equations 
can be cast in the form of a Wasserstein gradient flow,
from linear, nonlinear, and non-local Fokker-Planck equations \cite{carrillo2011,jko,otto2001}
over fourth order fluid and quantum models \cite{GOthinfilm,gianazza2009,MatthesMcCannSavare} 
to chemotaxis systems \cite{BCCkellersegel,blanchet2012,zinsl2015exponential}.
The $L^2$-Wasserstein framework has been extended in many different directions,
that allow to consider 
reaction diffusion equations \cite{Glitzky,Mielke}, Poisson-Nernst-Planck equations \cite{Kinderlehrer2016}, 
multi-component fluid systems \cite{laurencot2011},
and Cahn-Hilliard equations \cite{lisini2012},
as well as Markov chains \cite{MaasMC,MielkeMC}
as gradient flows in suitable metrics. \\

We shall not discuss the manifold advantages of such a variational formulation of the evolution,
but focus only on the fact that under certain convexity properties of $\nrg$, 
a gradient flow can be rather easily obtained as the limit of a convergent time-discrete approximation,
for instance by the variational forms of the implicit Euler and BDF2 methods discussed below.
While the order of convergence of the time discretization is usually irrelevant for existence proofs
(and the simpler Euler scheme might be advantageous over the more complicated BDF2 method),
it is a limiting factor for efficiency of numerical discretizations that are based on these approximations.
Indeed, a variety of structure preserving full (spatio-temporal) discretizations 
for numerical solution of Wasserstein gradient flows that have been developed recently,
see e.g. \cite{BCMO,JMO,Peyre}, and essentially all of these methods discretize by implicit Euler in time direction.
It seems that the variational character of the Euler scheme is essential 
to inherit desirable properties --- like monotonicity of the energy --- from the original flow 
to its discrete approximation,
and is even the key element for convergence proofs \cite{MO1}.
\emph{One of the conclusions from our work is that the variational BDF2 method introduced below
might give a better approximation in time, 
without losing convergence or the flow's variational character.}

\subsection{Minimizing Movement Scheme}\label{subsec:MMS}
Before we introduce our own temporal discretization, 
we recall some properties of the celebrated minimizing movement scheme,
also known as implicit Euler method or JKO stepping, depending on the context. Let the metric space $(\X,\bd)$ and the functional $\nrg$ be fixed;
we wish to construct a curve of steepest descent emerging from the initial data $u_0\in\X$. \\

\noindent  For metric gradient flows in the sense of \cite{ags,de1993new}, that scheme is defined as follows. 
\begin{quote}
  For each sufficiently small time step $\tau>0$, 
  let an initial condition $u_\tau^0$ be given that approximates $u^0$.
  Then define inductively a discrete solution $(u_\tau^k)_{k\in\N}$ such that
  each $u_\tau^k$ with $k=1,2,\ldots$ \,is a minimizer of
  the Yosida-penalized energy functional
  \begin{align*}
    w\mapsto\Phi(\tau,u_\tau^{k-1};w) := \frac1{2\tau}\bd^2(u_\tau^{k-1},w) + \nrg(w).
  \end{align*}
\end{quote}
There are various ``soft'' conditions that guarantee well-definedness of this scheme, 
i.e., the inductive solvability of the minimization problems;
one is formulated as Condition \ref{cond:ags} below.
It is easy to verify that, in the trivial setting discussed in Section \ref{sct:introtrivial},
the Euler-Lagrange equation for minimization of $\Phi(\tau,u_\tau^{k-1};\cdot)$ 
yields precisely the induction formula \eqref{eq:bdf1} for the implicit Euler scheme. 

One of the remarkable strengths of this method in the abstract setting is its stability:
by an easy induction argument, one proves that for arbitrary $0\le m\le n$,
\begin{align}
  \label{eq:stable1}
  \nrg(u_\tau^n) + \frac\tau{2}\sum_{k=m+1}^n\left(\frac{\bd(u_\tau^{k-1},u_\tau^{k})}\tau\right)^2 \le \nrg(u_\tau^m),
\end{align}
which immediately implies monotonicity of the energy, $\nrg(u_\tau^n)\le\nrg(u_\tau^m)$,
as well as a $\tau$-uniform bound on the ``integrated kinetic energy''.
These bounds are usually sufficient to conclude the convergence of the discrete solution
to a continuous curve $u_*$ in the limit $\tau\searrow0$.
Some additional work is needed to prove that $u_*$ is indeed a curve of steepest descent.
A famous hypothesis that makes this last step work is the convexity condition \cite[Assumption 4.0.4]{ags}:
\begin{cond}
  \label{cond:ags}
  There exists a $\lambda\in\R$ such that 
  for each sufficiently small $\tau>0$, for each reference point $u\in\dom$, 
  and for each pair of points $\gamma_0,\gamma_1\in\X$,
  there is some curve $(\gamma_s)_{s\in[0,1]}$ connecting $\gamma_0$ to $\gamma_1$ 
  along which $s\mapsto\Phi(\tau,u;\gamma_s)$ is uniformly convex of modulus $\frac1\tau+\lambda$.  
\end{cond}
We remark that recent adaptations of the minimizing movement scheme, see e.g. \cite{fg,pz,rms},
allow to treat also non-autonomous gradient flows along the same lines,
as long as the time-dependent energy functional has some ``convexity in the average''.

\subsection{BDF2 Method}
We propose the following construction of a discrete approximation $(u_\tau^k)_{k\in\N}$:
\begin{quote}
  For each sufficiently small time step $\tau>0$, 
  let a pair of initial conditions $(u_\tau^{\minus 1},u_\tau^{0})$ be given that approximate $u^0$.
  Then define inductively a discrete solution $(u_\tau^k)_{k\in\N}$ such that
  each $u_\tau^k$ with $k\in\N$ is a minimizer of
  the following functional,
  \begin{align*}
    w\mapsto\Psi(\tau,u_\tau^{k-2},u_\tau^{k-1};w) 
    := \frac1\tau\bd^2(w,u_\tau^{k-1})-\frac1{4\tau}\bd^2(w,u_\tau^{k-2})+\nrg(w).
  \end{align*}
\end{quote}
In the euclidean setting of Section \ref{sct:introtrivial},
the minimizer $u_\tau^k$ satisfies the BDF2 recursion \eqref{eq:bdf2}.
The BDF2 method is known to converge to second order under suitable smoothness hypotheses, see e.g. \cite{BD}.
A key feature of this approach is that a simple induction argument produces 
an intrinsic stability estimate that is similar to --- although slightly weaker than --- \eqref{eq:stable1} above:
\begin{align}
  \label{eq:stable2}
  \nrg(u_\tau^n) + \frac\tau{4}\sum_{k=m+1}^n\left(\frac{\bd(u_\tau^{k-1},u_\tau^{k})}\tau\right)^2 
  \le \nrg(u_\tau^m) + \frac1{4\tau}\bd^2(u_\tau^{m-1},u_\tau^{m}).
\end{align}
From \eqref{eq:stable2} for $n=m+1$, 
it is clear that the energy value $\nrg(u_\tau^k)$ is not necessarily diminished in each time step,
but the potential loss of monotonicity is well controlled.
Moreover, one still has the crucial bound on the ``integrated kinetic energy''.
The validity of estimate \eqref{eq:stable2} is related to the intrinsic \emph{A-stability} of the BDF2 scheme \eqref{eq:bdf2}.
Since the implicit Euler (BDF1) and the BDF2 methods are \emph{the only} A-stable backward differentiation formulae,
it is not to be expected that one can invent a variant of these schemes which a consistency order higher than two,
which also satisfies a stability estimate like \eqref{eq:bdf1}.
In this sense, our approach is optimal.

\subsection{Main Result}
Our main result, stated in Theorem \ref{thm:mainthm}, can be informally rephrased as follows.
We assume that $\nrg$ is lower semi-continuous, admits a sort of lower bound,
and satisfies the following adaptation of Condition \ref{cond:ags}:
\begin{cond}
  \label{cond:bdf2}
  There exists a $\lambda\in\R$ such that 
  for each sufficiently small $\tau>0$, for each pair of reference points $u,v\in\dom$, 
  and for each pair of points $\gamma_0,\gamma_1\in\X$,
  there is some curve $(\gamma_s)_{s\in[0,1]}$ connecting $\gamma_0$ to $\gamma_1$ 
  along which $s\mapsto\Psi(\tau,u,v;\gamma_s)$ is uniformly convex of modulus $\frac3{2\tau}+\lambda$. 
\end{cond}
Then, firstly, the induction described above is well-defined, 
i.e., from each initial pair $(u_\tau^{\minus 1},u_\tau^{0})$ one obtains a unique discrete solution $(u_\tau^k)_{k\in\N}$.
Secondly, these discrete solutions converge to a curve $u_*$ of steepest descent.
Thirdly, the convergence can be made quantitative:
roughly, the distance of $u_\tau^k$ to the associated limit value $u_*(k\tau)$ is at most of order $\sqrt\tau$.

\subsection{Outline of the Paper}\label{subsec:Outline}
After recalling some basic notions from the theory of gradient flows in metric spaces in Section \ref{sct:ags},
we specify our hypotheses and discuss examples in Section \ref{sec:Assump},
Section \ref{sec:Pre} is devoted to the well-posedness of the BDF2 scheme, 
and to the derivation of various a priori estimates.
Section \ref{sec:Proof} is the heart of the paper,
with the statement and the proof of our main result on convergence, see Theorem \ref{thm:mainthm}.
In Section \ref{sec:Num}, we show numerical convergence of the methods on different examples.

\section{Notations for Gradient Flows in Abstract Metric Spaces}
\label{sct:ags}
We briefly recall a few basic notations from the theory of gradient flows in abstract metric spaces,
following \cite{ags}.
Here and below, $(\X,\bd)$ is a separable and complete metric space.

\begin{dfntn}[AC curves] 
  A curve $u: \left[0,\infty \right) \rightarrow \X$ 
  is said to be \emph{$L^2$-absolutely continuous}, written as $u\in \mathrm{AC}^2 \left( \left[0,\infty\right),\X\right)$, 
  if there exists a function $m \in L^2_{\mathrm{loc}} \left( \left[0,\infty\right) \right)$ 
  such that
  \begin{align*}
    \bd(u(t),u(s)) \leq \int_s^t m(r)  \dd r  \qquad \mbox{for all} \ \ 0\leq s \leq t. 
  \end{align*}
\end{dfntn}
See \cite[Definition 1.1.1]{ags} for further details.
It can be shown \cite[Theorem 1.1.2]{ags} that among all possible choices for $m$, 
there is a minimal one, 
called the \emph{metric derivative} $|u'|\in L^2_{\mathrm{loc}} \left( \left[0, \infty\right) \right)$, 
given by
\begin{align*}
  |u'|(t) := \lim_{s\rightarrow t} \frac{\bd(u(s),u(t))}{\left|s-t\right|}\qquad \mathrm{for \ a.e. \ } t. 
\end{align*}
The main definition is that of a \emph{gradient flow} in the energy landscape of a functional $\nrg:\X\to\Ru$
with respect to the metric $\bd$.
Here we adopt the strong but rather restrictive notion of the \emph{evolution variational inequality} (EVI).
This is particularly well adapted to dealing with gradient flows in the $L^2$-Wasserstein space $(\prb,\wass)$,
which will be one of our main examples.
\begin{dfntn}[EVI]
  Let a proper and $\bd$-lower-semicontinuous functional $\nrg:\X\to\Ru$ be given.
  We say that $\nrg$ generates a $\lambda$-contractive gradient flow on $(\X,\bd)$
  with some $\lambda\in\R$ if the following is true:

  For each initial condition $u_0\in\dom$, 
  there is a corresponding solution curve $u\in \mathrm{AC}^2 \left( \left[0,\infty\right),\X\right)$ with $u(0)=u_0$,
  that satisfies the evolution variational inequality associated to $\nrg$:
  \begin{align}
    \label{eq:EVI}
    \frac{1}{2} \bd^2(u(t) ,w)  - \frac12 \bd^2(u(s),w)  
    \leq   \int_s^{t} \left[\nrg(w)-\nrg(u(r)) -  \frac{\lambda}{2} \bd^2(u(r),w)\right]  \dd r  
  \end{align}  
  holds for arbitrary $0\le s\le t$ and for every $w\in\dom$.
\end{dfntn}
For a discussion of the role of the EVI in gradient flow theory, we refer to \cite[Chapter 4.0]{ags}.
\begin{rmrk}
  The inequalities \eqref{eq:EVI} are actually the integrated versions of the differential EVI,
  \begin{align}
    \label{eq:dEVI}
    \frac{d}{dt}\bd^2(u(t),w) +\frac\lambda2\bd^2(u(t),w) + \nrg(u(t)) \le\nrg(w),
  \end{align}
  which is supposed to hold for a.e. $t>0$.
  The integrated form \eqref{eq:EVI} is more convenient for our purposes,
  and it is easy to show that validity of \eqref{eq:EVI} for arbitrary $0\le s\le t$ implies
  validity of \eqref{eq:dEVI} for a.e. $t>0$.  
\end{rmrk}
The EVI is a more restrictive characterization of gradient flows than, e.g., the energy-dissipation-equality.
Most notably, validity of the EVI implies that the gradient flow is $\lambda$-contractive on $(\X,\bd)$,
so in particular, solutions are uniquely determined by their initial datum.
Moreover, if the metric space $(\X,\bd)$ is ``almost Euclidean'' 
--- for instance, $\X$ is a Hilbert space, 
or $\X$ is the space $\prb$ of probability measures endowed with the Wasserstein metric $\wass$ ---
then the EVI further implies that $\nrg$ is uniformly semi-convex, $\lambda$ being a modulus of convexity,
see \cite{Daneri}.
Thus, \eqref{eq:EVI} is not available for gradient flows of non-semi-convex functionals $\nrg$.

\section{Setup, Assumptions and Examples}\label{sec:Assump}
\subsection{Definition of the Method}\label{subsec:Disc}
Define the \emph{BDF2 penalization} $\Psi:(0,\tau_*)\times\X\times\X\times\X\to\Ru$ of $\nrg$ 
by
\begin{align*}
  \Psi(\tau,u,v; \cdot):  \X \rightarrow \Ru; \quad \Psi(\tau,u, v ;w ):= \frac1{ \tau}  \bd^2(v,w) - \frac1{4 \tau} \bd^2(u,w) +  \mathcal{E}(w). 
\end{align*}
The \emph{discrete solution} (for $\nrg$ on $(\X,\bd)$) corresponding to 
a time step size $\tau\in(0,\tau_*)$ and a pair of initial data $(u_\tau^{\minus 1},u_\tau^{0})\in\X\times\X$
is the sequence $(\uk)_\kinN$, 
which is inductively obtained via
\begin{align}
  u_\tau^{k}  \in \underset{w\in\X}{\mathrm{argmin}} \ \Psi(\tau,u_\tau^{k-2},u_\tau^{k-1};w)	\label{eq:BDF2scheme}
\end{align}
for $k\in\N$. 
Finally, the \emph{interpolated solution} $\bar{u}_\tau:\left[0,\infty\right) \rightarrow\X$ 
is obtained by piecewise constant interpolation of the values $\uk$ in time,
\begin{align}
  \bar{u}_\tau(0)=u_0, \quad \bar{u}_\tau(t)=u_\tau^k \quad \mathrm{for} \ t\in\left((k-1)\tau, k\tau\right] \quad \mathrm{and} \ \kinN. \label{eq:DiscSol}
\end{align}

\subsection{Main Assumptions}				
%
From now on, we assume that an energy functional $\nrg:\X \rightarrow \Ru$ with nonempty domain $\dom$ is given, 
which satisfies the following hypotheses.
\begin{enumerate}[({E}1)] 
\item  \textbf{Semi-continuity:} $\nrg$ is sequentially lower semi-continuous on $(\X,\bd)$:
  \begin{align*}
    u_n \dar u \qquad \Longrightarrow \qquad \mathcal{E} (u) \leq \liminf_\ntoinf \mathcal{E}(u_n).
  \end{align*}
\item \textbf{Coercivity:} There exist $\tau_* > 0$ and $u_* \in \X$, such that
  \begin{align*}
    c_* := \inf_{v\in \X} \frac{1}{2 \tau_*} \bd^2(u_*,v) + \mathcal{E}(v) > - \infty . 
  \end{align*}
\item \textbf{Semi-convexity:} There exists a constant $\lambda$ such that for every $u,v,\gamma_0,\gamma_1 \in \dom$ and every $\tau\in\left[0,\tau_*\right)$, 
  there exists a continuous curve $\gamma_{(\cdot)}:[0,1] \rightarrow \X$ joining the given end points $\gamma_0$ and $\gamma_1$,
  along which the penalized energy $\Psi$ satisfies
  \begin{align}
    \Psi(\tau,u,v;\gamma_s)  
    \leq (1-s) \Psi(\tau,u,v; \gamma_0) + s \Psi(\tau,u,v;\gamma_1) - \frac{1}{2} \left( \frac3{2\tau} + \lambda\right) s(1-s) \bd^2(\gamma_0,\gamma_1) . 
    \label{eq:Stronglambdaconvex}
  \end{align}
\end{enumerate}
Moreover, without loss of generality, we assume that
\begin{align}
  \label{eq:hypos}
  \lambda \le 0 \quad\text{and}\quad (-\lambda)\tau_* \le \frac12.
\end{align}
Note that for $0<\tau<\tau_*$, the last term on the right hand side of \eqref{eq:Stronglambdaconvex} is positive 
for $\gamma_0\neq \gamma_1$ and $0<s<1$,
implying that $s\mapsto\Psi(\tau,u,v;\gamma_s)$ is strictly convex.
\begin{rmrk}
  Assumptions (E1)\&(E2) are standard minimal hypotheses on the energy in the context of metric gradient flows.
  (E3) is a reformulation of Condition \ref{cond:bdf2} from the introduction.
  As mentioned there, it plays an analogous role for the BDF2 discretization 
  as Condition \ref{cond:ags} plays for the minimizing movement scheme.
\end{rmrk}

\subsection{Examples}
In this section we discuss three general situations in which the convexity assumption (E3) is satisfied,
namely that of uniformly semi-convex functionals $\nrg$ on a Hilbert space $H$, 
that of semi-convex $C^{1}$-functions $\nrg$ on Riemannian manifolds of non-negative cross-curvature,
and that of functionals $\nrg$ on the the $L^2$-Wasserstein space $(\prb,\wass)$
that are uniformly displacement semi-convex.

\subsubsection{Hilbert space}
Uniformly semi-convex functionals on Hilbert spaces provide a class of fairly easy examples
for the validity of assumption (E3), thanks to the linear structure of the space.
\begin{thrm}\label{thm:Hilberconvex}
  Assume that the metric space $(\X,\bd)$ is a Hilbert space $\X=H$, with the distance $\bd$ induced by the norm $\|\cdot\|$.
  Assume further that $\nrg$ is uniformly semi-convex with modulus $\lambda$.
  Then (E3) is satisfied, with $\gamma_s$ is the straight line between $\gamma_0,\gamma_1$ and with the same $\lambda$.
\end{thrm}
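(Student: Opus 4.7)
The plan is to take the straight-line interpolant $\gamma_s := (1-s)\gamma_0 + s\gamma_1$ and reduce (E3) to the elementary Hilbert-space identity
\begin{equation*}
  \bd^2(\gamma_s,a) = (1-s)\bd^2(\gamma_0,a) + s\bd^2(\gamma_1,a) - s(1-s)\bd^2(\gamma_0,\gamma_1),
\end{equation*}
valid for every anchor point $a\in H$ (this is just the expansion of $\langle \gamma_s-a,\gamma_s-a\rangle$, or equivalently the parallelogram law). The crucial point of working with an \emph{equality} rather than a one-sided convexity inequality is that the sign of the coefficient multiplying $\bd^2(\cdot,a)$ is irrelevant, so both the positive term $\frac{1}{\tau}\bd^2(\cdot,v)$ and the negative term $-\frac{1}{4\tau}\bd^2(\cdot,u)$ in $\Psi$ can be handled simultaneously.

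I would then apply this identity twice, once with $a=v$ and coefficient $+\frac{1}{\tau}$ and once with $a=u$ and coefficient $-\frac{1}{4\tau}$, and collect the quadratic-in-$s$ remainder. The net coefficient of $-s(1-s)\bd^2(\gamma_0,\gamma_1)$ that comes out of the penalty portion is exactly $\frac{1}{\tau}-\frac{1}{4\tau}=\frac{3}{4\tau}$. Matching this with the prescribed right-hand side $-\frac{1}{2}\mu\,s(1-s)\bd^2(\gamma_0,\gamma_1)$ of \eqref{eq:Stronglambdaconvex} yields a convexity modulus of $\mu = \frac{3}{2\tau}$ for the penalty part (with equality, not inequality). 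Adding on the assumed $\lambda$-convexity inequality for $\nrg$ along the same segment produces the required total modulus $\frac{3}{2\tau}+\lambda$, finishing the proof.

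There is no real obstacle here; the entire argument is a one-line computation once the parallelogram identity is written down. The only thing to watch is the sign bookkeeping that turns the coefficient $\frac{3}{4\tau}$ appearing in the identity into the modulus $\frac{3}{2\tau}$ in the convexity inequality. It is also worth noting that this computation in fact clarifies the choice of the constants $1$ and $\tfrac14$ in front of the two distance-squared terms of $\Psi$: they are balanced so that the concave penalty does not swallow the convex one, while still producing the BDF2 recursion as Euler–Lagrange equation in the smooth Euclidean setting of Section~\ref{sct:introtrivial}.
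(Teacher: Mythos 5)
Your proposal is correct and follows essentially the same route as the paper: the paper's proof also takes the straight line $\gamma_s=(1-s)\gamma_0+s\gamma_1$, applies the exact quadratic expansion of $\|\gamma_s-a\|^2$ to both anchor points $v$ and $u$ simultaneously (its equation \eqref{eq:Hlconvex2}, with the same net coefficient $\tfrac34 s(1-s)\|\gamma_0-\gamma_1\|^2$), and adds the $\lambda$-convexity inequality for $\nrg$. Your sign bookkeeping and the resulting modulus $\tfrac{3}{2\tau}+\lambda$ match the paper exactly, so there is nothing to add.
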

\begin{proof}
  Let $\gamma_0,\gamma_1\in\dom$ as well as $u,v\in\dom$ and $\tau>0$ be given.
  We verify \eqref{eq:Stronglambdaconvex} for the particular curve $\gamma_s:=(1-s)\gamma_0+s\gamma_1$.
  On the one hand, by the convexity hypothesis on $\nrg$, 
  we know that
 \begin{align}
    \nrg(\gamma_s) \leq (1-s) \nrg(\gamma_0) +(1-s)\nrg(\gamma_1) - \frac{\lambda}{2} s(1-s) \left\|\gamma_0-\gamma_1\right\|^2. \label{eq:Hlconvex1}
  \end{align}
  On the other hand, a direct calculation using the property of the scalar product yields
  \begin{align}
    \begin{split}
      &\| \gamma_s-v\|^2 - \frac14 \| \gamma_s - u \|^2 \\
      &= (1-s)\big( \| \gamma_0-v \|^2-\frac14 \| \gamma_0-u \|^2 \big) 
      + s\big( \| \gamma_1-v\|^2-\frac14 \| \gamma_1-u\|^2 \big) 
      - \frac{3}{4} s(1-s) \|\gamma_0-\gamma_1\|^2.      
    \end{split}
    \label{eq:Hlconvex2}
  \end{align}
  Adding $\frac1\tau$ times \eqref{eq:Hlconvex2} to \eqref{eq:Hlconvex1} yields \eqref{eq:Stronglambdaconvex}.
\end{proof}

\subsubsection{Riemannian manifolds}
\label{sct:diffgeo}
Another situation of interest is that of the gradient flow on a compact smooth Riemannian manifold $(\mf,\g)$,
which is induced by a semi-convex function $\nrg\in C^1(\mf)$.
Here, our very general approach is clearly not optimal:
in that finite dimensional setting, gradient flows can be characterized in a direct way instead of using the EVI \eqref{eq:EVI}.
Further, there are explicit and local variants of the BDF2 method (avoiding the global minimization of $\Psi$ in each time step), see e.g. \cite{HLgf},
which are more simple to implement, and whose convergence is expected under more easily varifiable hypotheses than (E3).
Still, for the sake of completeness, we shall detail a sufficient criterion for applicability of our results in that situation.

To indicate why the verification of (E3) indeed poses a (surprisingly hard) problem, 
observe that it is in general \emph{not} possible to use the geodesic $\tilde \gamma_{(\cdot)}$ for the curve connecting $\gamma_0$ to $\gamma_1$ in \eqref{eq:Stronglambdaconvex}.
Indeed, for $s\mapsto\Psi(\tau,u,v;\tilde\gamma_s)$ to be uniformly convex of modulus $\frac3{2\tau}+\lambda$,
independently of $u$ and $v$,
one would essentially need that both $s\mapsto\bd^2(u,\gamma_s)$ and $s\mapsto-\bd^2(v,\gamma_s)$ 
are uniformly convex of modulus $\bd^2(\gamma_0,\gamma_1)$.
By Toponogov's theorem, the first condition would imply that $\mf$ has non-negative sectional curvature, 
and the later would imply that $\mf$ has non-positive sectional curvature;
hence, $\mf$ would need to be flat.

A more appropriate class of connecting curves are \emph{segments}, 
which are defined with the help of the exponential map $\exp$ as follows.
Fix $u\in\mf$, and let $\gamma_0,\gamma_1\in\mf$ lie outside of $u$'s cut locus $\cut(u)$.
Then, there are unique $\xi_0,\xi_1$ in the injectivity domain $I(u)\subset T_x\mf$ of the exponential map $\exp_u:T_u\mf\to\mf$
such that $\exp_u(\xi_0)=\gamma_0$ and $\exp_u(\xi_1)=\gamma_1$.
Further, assume that the straight line from $\xi_0$ to $\xi_1$ lies in $I(u)$.
The segment $[\gamma_0,\gamma_1;u]_{(\cdot)}:[0,1]\to\mf$ with base $u$ connecting $\gamma_0$ to $\gamma_1$
is then defined by $[\gamma_0,\gamma_1;u]_s=\exp_u((1-s)\xi_0+s\xi_1)$.

Kim and McCann \cite[Corollary 2.11]{kim2012towards} have established a sufficient criterion for the convexity of
\begin{align}
  \label{eq:helpf}
  [0,1]\ni s\mapsto\bd^2\big(u,[\gamma_0,\gamma_1;u](s)\big) -\bd^2\big(v,[\gamma_0,\gamma_1;u]_s\big),
\end{align}
independently of $v\in\mf$.
Their hypotheses are as follows.
\begin{enumerate}
\item[(KM0)] The squared metric $\bd^2(\cdot,\cdot)$, induced on $\mf$ via $\g$, is $C^4$-regular outside of the cut locus.
\item[(KM1)] For each $u\in\mf$, its injectivity domain $I(u)$ is convex, 
  so segments $[\gamma_0,\gamma_1;u]$ can be defined for arbitrary $\gamma_0,\gamma_1\notin\cut(u)$.
\item[(KM2)] For each segment $[\gamma_0,\gamma_1;u]$, there is a dense subset $V\subset\mf$, 
  such that there is no $v\in V$ and no $s\in[0,1]$ with $v\in\cut([\gamma_0,\gamma_1;u]_s)$. 
\item[(KM3)] $(\mf,\g)$ has non-negative cross curvature; the definition is given in Appendix \ref{apx:cross}.
\end{enumerate}
Apart from (KM0), each of these conditions is rather demanding.
A class of examples satisfying (KM0)--(KM3) are the round spheres $\Sd$.
For these, (KM0)--(KM2) are easily verified since $\cut(u)=\{-u\}$ only contains the antipodal point,
and $I(u)$ is the open $d$-dimensional ball of radius $\pi$, for each $u\in\Sd$.
In contrast, the proof of (KM3) has been a challenge even for spheres, that has been mastered in \cite[Theorem 6.2]{kim2012towards}.
It seems that --- apart from products and quotients of spheres --- no further explicit examples 
satisfying (KM0)--(KM3) are currently known.
\begin{thrm}
  \label{thm:diffgeo}
  Assume that $(\X,\bd)$ is a compact Riemannian manifold $(\mf,\g)$ that satisfies (KM0)--(KM3) above.
  Assume further that $\nrg\in C^{1}(\mf)$ is semi-convex.
  Then (E3) is satisfied with some $\lambda\in\R$, with $\gamma_{(\cdot)}:=[\gamma_0,\gamma_1;u]_{(\cdot)}$.
\end{thrm}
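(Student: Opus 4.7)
The idea is to verify (E3) with the segment $\gamma_s = [\gamma_0,\gamma_1;u]_s$ by splitting the penalization
\[
  \Psi(\tau,u,v;\gamma_s) = \tfrac{1}{\tau}\bd^2(v,\gamma_s) - \tfrac{1}{4\tau}\bd^2(u,\gamma_s) + \nrg(\gamma_s)
\]
into its three summands and treating each along $\gamma_{(\cdot)}$ separately. The crucial feature to exploit is that the base of the segment coincides with the reference point $u$ in $\Psi$, which makes one of the distance terms an exact quadratic and lets the other be controlled by non-negative cross curvature.

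The ``minus'' distance term is explicit: with $\xi_i := \exp_u^{-1}(\gamma_i)$ (which lie in the convex $I(u)$ by (KM1)), the segment evaluates to $\gamma_s = \exp_u\!\big((1-s)\xi_0 + s\xi_1\big)$, and hence
\[
  \bd^2(u,\gamma_s) \;=\; \big|(1-s)\xi_0 + s\xi_1\big|_{\g}^{2} \;=\; (1-s)\bd^2(u,\gamma_0) + s\bd^2(u,\gamma_1) - s(1-s)|\xi_1-\xi_0|_{\g}^{2},
\]
an identity that generalizes \eqref{eq:Hlconvex2} from the Hilbert case. The ``plus'' distance term is handled by \cite[Corollary 2.11]{kim2012towards}: under (KM0)--(KM3) it produces the convexity of $s\mapsto\bd^2(u,\gamma_s)-\bd^2(v,\gamma_s)$ stated in \eqref{eq:helpf}. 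Any $v$ on the cut locus of some $\gamma_s$ is handled by approximation through the dense subset supplied by (KM2) and the continuity of $\bd^2$. Combining the Kim--McCann inequality with the explicit quadratic above, and using Toponogov's theorem (non-negative cross curvature implies non-negative sectional curvature, whence $|\xi_1-\xi_0|_{\g}\ge\bd(\gamma_0,\gamma_1)$) to replace $|\xi_1-\xi_0|_{\g}^{2}$ by $\bd^2(\gamma_0,\gamma_1)$ in the modulus, yields a convexity estimate for the distance part of $\Psi$ with coefficient $\tfrac{1}{\tau}-\tfrac{1}{4\tau}=\tfrac{3}{4\tau}$ on the $s(1-s)\bd^2(\gamma_0,\gamma_1)$ term, i.e.\ modulus $\tfrac{3}{2\tau}$.

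For the energy, semi-convexity of $\nrg\in C^{1}(\mf)$ is an assumption on geodesics, so it must be transferred to the segment $\gamma_s$. I would compare $\gamma_s$ with the geodesic $\sigma_s$ joining $\gamma_0$ to $\gamma_1$: both agree at the endpoints, $\sigma$ has vanishing covariant acceleration, and the covariant acceleration of $\gamma$ is bounded by $\|D^{2}\exp_u\|\cdot|\xi_1-\xi_0|_{\g}^{2}$, which is uniformly controlled on the compact $(\mf,\g)$. A standard Jacobi-field/Grönwall estimate then gives $\bd(\gamma_s,\sigma_s)\le C\,s(1-s)\,\bd^2(\gamma_0,\gamma_1)$. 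Since $\|\nabla\nrg\|_\infty<\infty$, the mean value inequality $|\nrg(\gamma_s)-\nrg(\sigma_s)|\le\|\nabla\nrg\|_\infty\,\bd(\gamma_s,\sigma_s)$ then converts geodesic semi-convexity of $\nrg$ with modulus $\lambda_0$ into semi-convexity of $\nrg\circ\gamma$ with some modulus $\lambda\in\R$, after absorbing $C\|\nabla\nrg\|_\infty$.

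Adding the two convexity estimates gives \eqref{eq:Stronglambdaconvex} with modulus $\tfrac{3}{2\tau}+\lambda$. The main obstacle I anticipate is the correct application of the Kim--McCann corollary, in particular isolating from their statement the precise inequality needed here and handling the cut-locus issues via the density argument (KM2) by continuity. A secondary difficulty is the segment-to-geodesic comparison for $\nrg$ merely of class $C^{1}$: if the manifold Hessian of $\nrg$ is not controlled, one may have to either strengthen the regularity to $C^{1,1}$, or interpret semi-convexity in an Alexandrov sense and conclude by an approximation of $\nrg$ by smooth semi-convex functions.
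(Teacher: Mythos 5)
Your plan follows the paper's proof of this theorem almost verbatim for the distance terms: the same decomposition into an exactly quadratic piece $\tfrac{3}{4\tau}\bd^2(\text{base},\gamma_s)$ (computed through the exponential map) plus a positive multiple of the Kim--McCann difference \eqref{eq:helpf}, the same appeal to \cite[Corollary 2.11]{kim2012towards} under (KM0)--(KM3), and the same use of non-negative sectional curvature to replace $\|\xi_1-\xi_0\|_u^2$ by $\bd^2(\gamma_0,\gamma_1)$ in the modulus. One caveat you should fix: for the signs to close, the base of the segment must be the reference point carrying the \emph{positive} coefficient $\tfrac1\tau$ in $\Psi$, which by the definition in Section 3.1 is $v$ (the more recent iterate), not $u$. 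With your literal assignment (base $u$, which carries the coefficient $-\tfrac1{4\tau}$), the exact quadratic contributes $+\tfrac{1}{4\tau}s(1-s)\|\xi_1-\xi_0\|_u^2$ — the wrong sign for the upper bound \eqref{eq:Stronglambdaconvex} — and the Kim--McCann convexity of $\bd^2(u,\gamma_s)-\bd^2(v,\gamma_s)$ only bounds $\bd^2(v,\gamma_s)$ from \emph{below}, which is useless for bounding $\tfrac1\tau\bd^2(v,\gamma_s)$ from above. Writing instead $\tfrac1\tau\bd^2(v,\cdot)-\tfrac1{4\tau}\bd^2(u,\cdot)=\tfrac3{4\tau}\bd^2(v,\cdot)+\tfrac1{4\tau}\bigl[\bd^2(v,\cdot)-\bd^2(u,\cdot)\bigr]$ along $[\gamma_0,\gamma_1;v]$ makes everything work; the paper's own statement and proof contain the same $u\leftrightarrow v$ slip, so this is a labelling issue rather than a conceptual gap, but it is the one place where your argument as written would not compile.

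Where you genuinely deviate is the energy term. The paper differentiates directly: $\tfrac{d^2}{ds^2}\nrg(\gamma_s)=\hess\nrg(\gamma_s)[\dot\gamma_s]+d\nrg(\gamma_s)[\nabla_{\dot\gamma_s}\dot\gamma_s]$, bounds the covariant acceleration of the segment by a uniform bound $K$ on the Christoffel symbols times $\|\xi_1-\xi_0\|_u^2$, and obtains $\lambda=\lambda'-K\|\nrg\|_{C^1}$, handling $\nrg\in C^1$ by approximation with $C^2$ semi-convex functions. Your route --- comparing the segment with the connecting geodesic via $\bd(\gamma_s,\sigma_s)\le Cs(1-s)\bd^2(\gamma_0,\gamma_1)$ and then using the mean value inequality --- reaches an equivalent modulus $\lambda_0-2C\|\nabla\nrg\|_\infty$ and has the merit of working directly with the inequality form of geodesic semi-convexity (so no $C^2$ approximation is needed), but the segment-to-geodesic comparison is itself a nontrivial Jacobi-field/Gr\"onwall estimate that costs about as much as the paper's Christoffel bound. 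Both treatments are viable; the paper's is shorter to write down.
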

\begin{proof}
  For given $u,v\in\mf$ and $\gamma_0,\gamma_1\in\mf\setminus\cut(u)$, let $\gamma_{(\cdot)}:=[\gamma_0,\gamma_1;u]_{(\cdot)}$;
  the result for general $\gamma_0,\gamma_1\in\mf$ follows by continuity a forteriori.
  Further, we shall assume that $\nrg\in C^2(\mf)$ during the computations.
  Since $\nrg$ is semi-convex, and $\mf$ is compact, there is a global modulus $\lambda'\le0$ of convexity, 
  i.e., $\hess\nrg(u)\ge\lambda'$ as a quadratic form on each $T_u\mf$.
  The final estimate \eqref{eq:manifold} depends only on $\lambda'$, 
  so (E3) follows for general semi-convex $\nrg\in C^1(\mf)$ by approximation.

  We split \[\Phi(\tau,u,v;\gamma_s)=h_1(s)+h_2(s)+h_3(s),\]
  with $h_1,h_2,h_3:[0,1]\to\R$ given by
  \begin{align*}
    h_1(s) = \frac3{4\tau}\bd^2(u,\gamma_s), \quad
    h_2(s) = \frac1{4\tau}\big(\bd^2(u,\gamma_s)-\bd^2(v,\gamma_s)\big), \quad
    h_3(s) = \nrg(\gamma_s).
  \end{align*}
  First, by definition of the segment $\gamma_{(\cdot)}$ via the exponential map,
  $s\mapsto\bd^2(u,\gamma_s)$ is twice differentiable with
  \begin{align*}
    \frac3{4\tau}\frac{d^2}{ds^2}\bd^2(u,\gamma_s) \equiv \frac3{2\tau}\|\xi_1-\xi_0\|_u^2,
  \end{align*}
  where $\|\xi\|_u^2=\g_u(\xi,\xi)$. 
  Second, by the hypotheses (KM0)--(KM3), the result from \cite[Corollary 2.11]{kim2012towards} applies, so $h_2$ is convex.
  Finally, concerning $h_3$:  
  in the normal coordinates induced by $\exp_u:I(u)\to\mf$, the segment $\gamma_{(\cdot)}$ is the straight line connecting $\xi_0$ to $\xi_1$,
  hence (recalling the definition of the Hessian, and that $\exp_u$ is a 1-Lipschitz map):
  \begin{align*}
    h_3''(s) = \frac{d^2}{ds^2}\nrg(\gamma_s) 
    &= \hess\nrg(\gamma_s)[\dot\gamma_s] + d\nrg(\gamma_s)\big[\nabla_{\dot\gamma_s}\dot\gamma_s\big]\\
    &\ge \lambda'\|\dot\gamma_s\|_{\gamma_s}^2 -
      \|\nrg\|_{C^1}\left\|\sum_{i,j,k}\Gamma_{i,j}^k(\xi_1^i-\xi_0^i)(\xi_1^j-\xi_0^j)\frac{\partial}{\partial x_k}\right\|_{\gamma_s} 
    \ge \big(\lambda'-K\|\nrg\|_{C^1}\big)\|\xi_1-\xi_0\|_u^2.
  \end{align*}
  Here $K$ is a bound on the Christoffel symbols $\Gamma_{ij}^k$ on the smooth and compact manifold $(\mf,\g)$,
  and for the estimate $\|\dot\gamma_s\|\le\|\xi_1-\xi_0\|_u$, 
  we have used that (KM3) implies that $(\mf,\g)$ is of non-negative sectional curvature.

  In summary, we have shown that $s\mapsto\Phi(\tau,u,v;\gamma_s)$ is uniformly convex of modulus
  \begin{align}
    \label{eq:manifold}
    \left(\frac3{4\tau}+\lambda\right)\|\xi_1-\xi_0\|_u^2 \quad \text{with} \quad \lambda:=\lambda'-K\|\nrg\|_{C^1}.
  \end{align}
  Recalling that (KM3) implies non-negative sectional curvature on $(\mf,\g)$,
  we conclude that $\bd^2(\gamma_0,\gamma_1)\le\|\xi_1-\xi_0\|_u^2$,
  so the claim (E3) follows.
\end{proof}

\subsubsection{Wasserstein space}
In our last example, we consider the $L^2$-Wasserstein space $\left(\prb,\wass\right)$ 
of the probability measures of finite second moment over a convex, possibly unbounded domain $\Omega\subset\Rd$. 
And we assume that $\nrg$ is uniformly displacement semi-convex; the definition is recalled below.
We remark that the class of gradient flows generated in this setting encompasses nonlinear drift-diffusion-aggregation equations
of the form
\begin{align*}
  \partial_tu = \Delta(u^m) + \nabla\cdot(u\nabla V) + \nabla\cdot(u\ast\nabla W\,u),
\end{align*}
under the restrictions that $m\ge(d-1)/d$, and that $V,W\in C^2(\Omega)$ are uniformly semi-convex.

$\left(\prb,\wass\right)$ is a complete geodesic space, which has non-negative curvature in the sense of Alexandrov.
Similarly as in the case of (non-negatively cross-curved) Riemannian manifolds discussed above, 
one cannot expect that hypothesis (E3) is satisfied for the geodesic $\tilde\gamma_{(\cdot)}$ 
connecting the two given measures $\gamma_0,\gamma_1\in\prb$.
Indeed, $s\mapsto\wass^2(\tilde\gamma_s,u)$ is typically \emph{not} uniformly convex of modulus $\wass^2(\gamma_0,\gamma_1)$,
see \cite[Example 7.3.3]{ags}.
Again, segments with prescribed base point are more appropriate.

We need to recall some basic notations from the theory of optimal mass transport.
$\pgen(\Omega_j\times\Omega_k)$ is the space of probability measures with finite second moment on the cross product $\Omega\times\Omega$,
and the indices $j$ and $k$ indicate that we use coordinates $x_j\in\Omega$ and $x_k\in\Omega$ on the components, 
i.e., we write $x=(x_j,x_k)\in\Omega\times\Omega$.
We introduce the canonical projections $\pi_j:(x_j,x_k)\mapsto x_j$,
and for $t\in[0,1]$, we write $(1-t)\pi_0+t\pi_1$ for brevity.
We write $\pi_j\#\bmu$ for the $j$-marginal of $\bmu\in\pgen(\Omega_j\times\Omega_k)$,
and analogously, for $\bmu\in\pgen(\Omega_0\times\Omega_1)$ and $t\in[0,1]$,
the interpolating measure $\pi_t\#\bmu$ is characterized by
\begin{align*}
  \int \varphi(y)\,d\pi_t\#\bmu(y) = \int \varphi\big((1-t)x_0+tx_1\big)\,d\bmu(x),
  \quad
  \text{for all $\varphi\in C^0_b(\Omega)$}.
\end{align*}
A transport plan from $\mu_0\in\pgen(\Omega_0)$ to $\mu_1\in\pgen(\Omega_1)$ 
is any $\bmu\in\pgen(\Omega_0\times\Omega_1)$ satisfying 
the marginal constraints $\pi_0\#\bmu=\mu_0$ and $\pi_1\#\bmu=\mu_1$.
Such a plan $\bmu$ is called optimal if it is a minimzer in the Kantorovich problem 
\begin{align}
  \label{eq:kantorovich}
  \bmu \mapsto \int |x_0-x_1|^2\,d\bmu(x).
\end{align}
For any given $\mu_0,\mu_1\in\pgen(\Omega)$, there exists at least one optimal plan.
The corresponding minimal value of the integral in \eqref{eq:kantorovich} defines
the squared Wasserstein distance $\wass^2(\mu_0,\mu_1)$.

We are going to use the following two facts,
which are essentially \cite[Lemma 5.3.2]{ags} and \cite[Proposition 7.3.1]{ags}:
\begin{enumerate}
\item \emph{Glueing lemma:} Given $\alpha\in\pgen(\Omega_0\times\Omega_2)$ and $\beta\in\pgen(\Omega_1\times\Omega_2)$
  with $\pi_2\#\alpha=\pi_2\#\beta$,
  there exists a $\mu\in\pgen(\Omega_0\times\Omega_1\times\Omega_2)$ such that
  $(\pi_0,\pi_2)\#\mu=\alpha$ and $(\pi_1,\pi_2)\#\mu=\beta$.
\item \emph{Curve lemma:} Given $\alpha\in\pgen(\Omega_0\times\Omega_1)$, $\beta\in\pgen(\Omega_3)$ and $t\in[0,1]$,
  there exists a $\mu\in\pgen(\Omega_0\times\Omega_1\times\Omega_3)$ such that
  $(\pi_0,\pi_1)\#\mu=\alpha$, and $(\pi_t,\pi_3)\#\mu$ is an optimal transport plan from $\pi_t\#\alpha$ to $\beta$.
\end{enumerate}
Segments --- which are referred to as generalized geodesics in \cite{ags} --- are defined as follows.
Let $\bmu_{02}\in\pgen(\Omega_0\times\Omega_2)$ and $\bmu_{12}\in\pgen(\Omega_1\times\Omega_2)$ be optimal plans 
for the transport of $\gamma_0$ and $\gamma_1$, respectively, to $u$.
By the glueing lemma, there exists a $\bmu_{012}$ 
such that $(\pi_0,\pi_2)\#\bmu_{012}=\bmu_{01}$ and $(\pi_1,\pi_2)\#\bmu_{012}=\bmu_{12}$.
Then $[\gamma_0,\gamma_1;u]_s:=\pi_s\#\bmu_{012}$.
Finally, we recall that $\nrg$ being uniformly displacement semi-convex of modulus $\lambda$ means that
\begin{align*}
  \nrg\big([\gamma_0,\gamma_1;u]_s\big)
  \le(1-s)\nrg(\gamma_0)+s\nrg(\gamma_1) - \frac\lambda2\int|x_0-x_1|^2\,d\bmu_{012}(x).
\end{align*}
\begin{thrm}
  Assume that the metric space $(\X,\bd)$ is the $L^2$-Wasserstein space $(\prb,\wass)$, with $\Omega\subset\Rd$.
  Assume further that $\nrg$ is uniformly displacement semi-convex of modulus $\lambda$.
  Then (E3) is satisfied, with the same $\lambda$, for $\gamma_{(\cdot)}=[\gamma_0,\gamma_1;u]_{(\cdot)}$.
\end{thrm}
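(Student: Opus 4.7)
The plan is to verify (E3) for the curve $\gamma_s = [\gamma_0,\gamma_1;u]_s$ by bounding the three summands of
\[
\Psi(\tau,u,v;\gamma_s) = \tfrac{1}{\tau}\wass^2(v,\gamma_s) - \tfrac{1}{4\tau}\wass^2(u,\gamma_s) + \nrg(\gamma_s)
\]
separately along the segment and then summing, mirroring the bookkeeping of Theorem \ref{thm:Hilberconvex}: the $\wass^2(v,\cdot)$-piece will contribute modulus $\tfrac{2}{\tau}$ to the convexity, the $-\wass^2(u,\cdot)/(4\tau)$-piece will subtract $\tfrac{1}{2\tau}$ (the sign flip coming from the negative prefactor combined with convexity of the distance), for a net $\tfrac{3}{2\tau}$, and the displacement semi-convexity of $\nrg$ will supply the remaining $\lambda$.

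First I would build, by iterated application of the glueing and curve lemmas, a single 4-plan $\bmu_{0123}\in\pgen(\Omega^4)$ such that each of the four 2-marginals $(\pi_i,\pi_2)\#\bmu_{0123}$ and $(\pi_i,\pi_3)\#\bmu_{0123}$ (for $i=0,1$) is an optimal transport plan: from $\gamma_i$ to $u$ in the first case, from $\gamma_i$ to $v$ in the second. Define $\bmu_{012}:=(\pi_0,\pi_1,\pi_2)\#\bmu_{0123}$, $\gamma_s:=\pi_s\#\bmu_{012}$ and $W^2_\mu:=\int|x_0-x_1|^2\,d\bmu_{012}$, and note that $W^2_\mu\ge\wass^2(\gamma_0,\gamma_1)$ because $(\pi_0,\pi_1)\#\bmu_{012}$ is a (generally suboptimal) transport plan between $\gamma_0$ and $\gamma_1$. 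The pointwise identity $|(1-s)a+sb-c|^2 = (1-s)|a-c|^2 + s|b-c|^2 - s(1-s)|a-b|^2$ applied to $(x_0,x_1,x_2)$ and integrated against $\bmu_{012}$, together with the optimality of the $(\pi_i,\pi_2)$-marginals and the fact that the induced plan $(\pi_s,\pi_2)\#\bmu_{012}$ is itself the optimal coupling of $\gamma_s$ with $u$ on a base-$u$ segment, produces the \emph{equality}
\[
\wass^2(u,\gamma_s) = (1-s)\wass^2(u,\gamma_0) + s\wass^2(u,\gamma_1) - s(1-s)\,W^2_\mu.
\]
The same identity applied to $(x_0,x_1,x_3)$ and integrated against $\bmu_{0123}$, together with the optimality of the $(\pi_i,\pi_3)$-marginals, yields the \emph{inequality}
\[
\wass^2(v,\gamma_s) \le (1-s)\wass^2(v,\gamma_0) + s\wass^2(v,\gamma_1) - s(1-s)\,W^2_\mu,
\]
where $\le$ arises because $(\pi_s,\pi_3)\#\bmu_{0123}$ is merely some plan from $\gamma_s$ to $v$.

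Combining these with the displacement semi-convexity bound $\nrg(\gamma_s) \le (1-s)\nrg(\gamma_0) + s\nrg(\gamma_1) - \tfrac{\lambda}{2}s(1-s)W^2_\mu$, and multiplying the three relations by $\tfrac{1}{\tau}$, $-\tfrac{1}{4\tau}$, $1$ respectively — where the negative sign of $-\tfrac{1}{4\tau}$ preserves the direction of the overall inequality \emph{only because the $u$-relation is an equality} — gives
\[
\Psi(\tau,u,v;\gamma_s) \le (1-s)\Psi(\tau,u,v;\gamma_0) + s\Psi(\tau,u,v;\gamma_1) - \tfrac{1}{2}\bigl(\tfrac{3}{2\tau}+\lambda\bigr)s(1-s)\,W^2_\mu.
\]
Since the hypothesis $(-\lambda)\tau_*\le\tfrac12$ ensures $\tfrac{3}{2\tau}+\lambda>0$ on $(0,\tau_*)$, and since $W^2_\mu\ge\wass^2(\gamma_0,\gamma_1)$, the $W^2_\mu$ on the right can be replaced by the smaller $\wass^2(\gamma_0,\gamma_1)$, which is exactly the bound (E3) asks for. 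The principal technical obstacle is the construction of a single 4-plan $\bmu_{0123}$ satisfying all four optimality conditions simultaneously: a single glueing synchronises only one pair of marginals, and producing a coherent 4-plan requires the curve lemma together with a careful choice of the underlying $(\pi_0,\pi_1)$-marginal of $\bmu_{012}$. The importance of the equality in the $u$-relation — and hence of knowing that $(\pi_s,\pi_2)\#\bmu_{012}$ really is an optimal plan — is worth emphasising, because a bare upper bound on $\wass^2(u,\gamma_s)$ would multiply against the negative prefactor $-\tfrac{1}{4\tau}$ to produce only a useless lower bound on the corresponding convexity excess.
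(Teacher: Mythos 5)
Your final bookkeeping ($\frac{2}{\tau}-\frac{1}{2\tau}=\frac{3}{2\tau}$ plus $\lambda$, and the reduction from $W^2_\mu$ to $\wass^2(\gamma_0,\gamma_1)$ using $\frac{3}{2\tau}+\lambda>0$) is correct, and you have correctly identified the crux: the term carrying the prefactor $-\frac{1}{4\tau}$ needs a \emph{lower} bound on the corresponding squared distance along the curve. But both devices you use to obtain it fail. The asserted equality $\wass^2(u,\gamma_s)=(1-s)\wass^2(u,\gamma_0)+s\wass^2(u,\gamma_1)-s(1-s)W^2_\mu$ is false in general: along a generalized geodesic with base $u$, the induced plan $(\pi_s,\pi_2)\#\bmu_{012}$ need \emph{not} be optimal from $\gamma_s$ to $u$; only the inequality ``$\le$'' is guaranteed (cf.\ \cite[Lemma 9.2.1]{ags}), since $(\prb,\wass)$ is positively curved rather than flat. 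Likewise, the four-fold plan $\bmu_{0123}$ with all four pair marginals $(\pi_i,\pi_2)\#\bmu_{0123}$ and $(\pi_i,\pi_3)\#\bmu_{0123}$ optimal does not exist in general: the glueing lemma synchronises two plans only along one common marginal, and the curve lemma makes $(\pi_s,\pi_3)\#\bmu$ optimal for a \emph{single} prescribed $s$, not for $s=0$ and $s=1$ simultaneously; imposing all four optimality constraints prescribes two generally incompatible $(\pi_0,\pi_1)$-marginals.

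What is actually needed is weaker and is exactly what the two lemmas provide. For the distance to any fixed point $\sigma$ one has, along \emph{any} curve of the form $s\mapsto\pi_s\#\alpha$, the \emph{lower} bound $\wass^2(\sigma,\gamma_s)\ge(1-s)\wass^2(\sigma,\gamma_0)+s\wass^2(\sigma,\gamma_1)-s(1-s)\int|x_0-x_1|^2\,d\alpha$: apply the curve lemma at the given $s$ to make the plan from $\gamma_s$ to $\sigma$ optimal, whence the two end-point plans are merely admissible. The \emph{upper} bound ($1$-convexity) holds only when the curve is the generalized geodesic \emph{based at} $\sigma$. Since $\Psi$ carries $+\frac{1}{\tau}$ in front of $\bd^2(v,\cdot)$ and $-\frac{1}{4\tau}$ in front of $\bd^2(u,\cdot)$, the upper bound is needed for $v$ and the lower bound for $u$, so the segment must be based at $v=u_\tau^{k-1}$ rather than at $u$ as in your construction. (Note that the proof printed in the paper glues the optimal plans to $u$ and then derives the upper bound for $\wass^2(\cdot,u)$ and the lower bound for $\wass^2(\cdot,v)$, which is the reverse of what the signs in $\Psi$ require; the argument closes only after interchanging the roles of $u$ and $v$, equivalently after basing the segment at the more recent iterate.) With the base at $v$, no equality and no four-fold plan are needed.
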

\begin{proof}
  Let $u,v,\gamma_0,\gamma_1\in\prb$ be given, and let $\bmu_{012}$ be as above.
  We are going to prove the inequality \eqref{eq:Stronglambdaconvex} directly for a fixed value $s\in(0,1)$.

  Since $(\pi_s,\pi_2)\#\bmu_{012}$ is \emph{some} transport from $\gamma_s$ to $u$,
  and $(\pi_0,\pi_2)\#\bmu_{012}$, $(\pi_1,\pi_2)\#\bmu_{012}$ are both optimal,
  \begin{align*}
    \wass^2(\gamma_s,u) 
    &\le \int |(1-s)x_0+sx_1-x_2|^2\,d\bmu_{012}(x) \\
    &= \int \big[(1-s)|x_0-x_2|^2+s|x_1-x_2|^2-s(1-s)|x_0-x_1|^2\big]\,d\bmu_{012}(x) \\
    &=(1-s)\wass^2(\gamma_0,u)+s\wass^2(\gamma_1,u)-s(1-s)\int|x_0-x_1|^2\,d\bmu_{012}(x).
  \end{align*}
  By the curve extension lemma, there exists a $\bmu_{013}\in\pgen(\Omega_0\times\Omega_1\times\Omega_3)$,
  such that $(\pi_0,\pi_1)\#\bmu_{013}=(\pi_0,\pi_1)\#\bmu_{012}$, 
  and $(\pi_s,\pi_3)\#\bmu_{013}$ is an optimal plan from $\gamma_s$ to $v$.
  It follows that $(\pi_0,\pi_3)\#\bmu_{013}$ and $(\pi_1,\pi_3)\#\bmu_{013}$ 
  are some transport plans from $\gamma_0$ and $\gamma_1$, respectively, to $v$,
  and so
  \begin{align*}
    \wass^2(\gamma_s,v) 
    &= \int |(1-s)x_0+sx_1-x_3|^2\,d\bmu_{013}(x) \\
    &= \int \big[(1-s)|x_0-x_3|^2+s|x_1-x_3|^2-s(1-s)|x_0-x_1|^2\big]\,d\bmu_{013}(x) \\
    &\ge(1-s)\wass^2(\gamma_0,v)+s\wass^2(\gamma_1,v)-s(1-s)\int|x_0-x_1|^2\,d\bmu_{012}(x).
  \end{align*}
  In combination with the definition of $\lambda$-uniform displacement convexity of $\nrg$,
  we arrive at 
  \begin{align*}
    \Psi(\tau,u,v;\gamma_s)  
    \leq (1-s) \Psi(\tau,u,v; \gamma_0) + s \Psi(\tau,u,v;\gamma_1) - \frac{1}{2} \left( \frac3{2\tau} + \lambda\right) s(1-s) \int |x_1-x_0|^2\,d\bmu_{012}(x).
  \end{align*}
  Clearly, the integral above is larger or equal to $\wass^2(\gamma_0,\gamma_1)$, 
  hence \eqref{eq:Stronglambdaconvex} for any $\tau>0$ so small that $\frac3{2\tau}+\lambda>0$.
\end{proof}

\section{Well-posedness of the Scheme and Classical Estimates}
\label{sec:Pre}
In this section, we study basic properties of the BDF2 scheme.
First, we prove well-posedness in the sense that 
for all sufficiently small $\tau>0$, and arbitrary data $u,v\in\dom$,
the functional $\Psi(\tau,u,v;\cdot)$ possesses a unique minimizer in $\dom$.
Consequently, for an arbitrary pair $(u_\tau^{\minus 1},u_\tau^{0})$ of initial conditions, 
one obtains inductively a unique global discrete solution $(u^k_\tau)_\kinN$ 
by solving the corresponding sequence of minimization problems in \eqref{eq:BDF2scheme}.
Subsequently, we derive some fundamental estimates that are needed for the convergence proof later.

Recall that Assumptions (E1)--(E3) are supposed to hold, with \eqref{eq:hypos}.
\begin{thrm}[Existence of a minimizer] \label{ExistenceBDF2}
  For all $\tau \in \left(0 , \tau_* \right)$ and for all $ u,v \in \X$, 
  there exists a unique minimizer $w_* \in \dom$ of $w \mapsto \Psi(\tau,u,v; w)$.
\end{thrm}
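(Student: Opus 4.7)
My plan is to run the direct method of the calculus of variations in the metric setting, using the strict convexity supplied by (E3) as a substitute for weak compactness. The key observation is that the convexity modulus $\mu := \frac{3}{2\tau} + \lambda$ is strictly positive uniformly on $(0,\tau_*)$: indeed \eqref{eq:hypos} gives $\lambda \ge -\frac{1}{2\tau_*}$, and combined with $\tau < \tau_*$ one finds $\mu \ge \frac{3}{2\tau_*} - \frac{1}{2\tau_*} = \frac{1}{\tau_*} > 0$. This strict positivity is what drives both the coercivity of $\Psi(\tau,u,v;\cdot)$ and the Cauchy property of any minimizing sequence.

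First I would establish a quadratic lower bound of the form $\Psi(\tau, u, v; w) \ge \alpha\, \bd^2(v, w) - K$ with $\alpha > 0$. Using (E2) in the form $\nrg(w) \ge c_* - \frac{1}{2\tau_*}\bd^2(u_*, w)$ together with the Young-type triangle estimates
\[
\bd^2(u, w) \le (1+\epsilon)\bd^2(v, w) + C_\epsilon\, \bd^2(u, v), \qquad \bd^2(u_*, w) \le (1+\epsilon)\bd^2(v, w) + C_\epsilon\, \bd^2(u_*, v),
\]
one obtains, after substitution, the bound
\[
\Psi(\tau, u, v; w) \ge \left[\frac{3 - \epsilon}{4\tau} - \frac{1 + \epsilon}{2\tau_*}\right] \bd^2(v, w) - K(u, v, u_*, \tau, \epsilon).
\]
For $\epsilon > 0$ chosen sufficiently small (independently of $\tau$), the bracketed coefficient is strictly positive for every $\tau \in (0, \tau_*)$. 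Hence $M := \inf_{w \in \X} \Psi(\tau, u, v; w) > -\infty$ and every minimizing sequence stays $\bd$-bounded.

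Next, pick a minimizing sequence $(w_n) \subset \dom$, which exists because $\dom \neq \emptyset$ forces $M < \infty$. For each pair $(n, m)$, assumption (E3) supplies a curve $\gamma^{n,m}_{(\cdot)}$ from $w_n$ to $w_m$ along which $\Psi$ is strongly convex with modulus $\mu$. Evaluating \eqref{eq:Stronglambdaconvex} at $s = 1/2$ and using $\Psi(\gamma^{n,m}_{1/2}) \ge M$ gives
\[
\frac{\mu}{8}\, \bd^2(w_n, w_m) \le \tfrac{1}{2}\bigl(\Psi(w_n) - M\bigr) + \tfrac{1}{2}\bigl(\Psi(w_m) - M\bigr) \longrightarrow 0,
\]
so $(w_n)$ is Cauchy in the complete metric space $(\X, \bd)$, with some limit $w_* \in \X$. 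The lower semicontinuity (E1) of $\nrg$ combined with continuity of the distance terms in the target slot yields $\Psi(\tau, u, v; w_*) \le \liminf_n \Psi(w_n) = M$, so $w_*$ is a minimizer, and finiteness of $\Psi(w_*)$ forces $w_* \in \dom$. Uniqueness follows immediately from the same midpoint argument: two minimizers $w_*, w_*'$ with $\bd(w_*, w_*') > 0$ would produce $\Psi(\gamma_{1/2}) < M$ via \eqref{eq:Stronglambdaconvex}, a contradiction.

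The main obstacle is the coercivity step. The negative contribution $-\frac{1}{4\tau}\bd^2(u, w)$ in $\Psi$ has to be absorbed both by the positive $\frac{1}{\tau}\bd^2(v, w)$ term and against the worst-case quadratic lower bound for $\nrg$ coming from (E2), while keeping a strictly positive net coefficient on $\bd^2(v, w)$ uniformly for $\tau \in (0, \tau_*)$. The normalization $(-\lambda)\tau_* \le 1/2$ in \eqref{eq:hypos} is exactly what makes both this balancing and the positivity of $\mu$ succeed; everything else is a textbook application of strong convexity plus metric completeness.
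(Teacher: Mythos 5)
Your argument is correct and follows essentially the same route as the paper: a lower bound on $\Psi(\tau,u,v;\cdot)$ obtained by absorbing $-\frac1{4\tau}\bd^2(u,\cdot)$ and the quadratic defect of (E2) into $\frac1\tau\bd^2(v,\cdot)$ via triangle/Young inequalities, followed by the midpoint estimate from (E3) showing every minimizing sequence is Cauchy, completeness plus (E1) to identify the limit as a minimizer, and strict convexity for uniqueness. The only difference is cosmetic (your $\epsilon$-parametrized Young inequality versus the paper's explicit coefficients $\frac{2\tau_*}{\tau_*\pm\tau}$), so there is nothing substantive to add.
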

\begin{proof}
  Let $\tau \in \left(0 , \tau_* \right)$ and $ u,v \in \X$ be fixed.
  For brevity, we write $\psi(w):=\Psi(\tau,u,v;w)$.

  First, we show that $\psi$ is bounded from below.
  By the triangle inequality and the binomial formula, we have that
  \begin{align*}
    \bd^2(u,w) \leq 2 \bd^2(u,v) + 2 \bd^2(v,w), 
    \quad
    \bd^2(u_*,w)& \leq \frac{2\tau_*}{\tau_*+ \tau} \bd^2(w,v) + \frac{2\tau_*}{\tau_* -\tau} \bd^2(v,u_*).
  \end{align*}
  Substituting these estimates into the definition of $\psi(w)=\Psi(\tau,u,v;w)$
  and using Assumption (E2), we obtain for each $w\in\dom$:
  \begin{align*}
    \psi(w)
    \ge & \ \ \ \, \frac1{\tau_* + \tau}\bd^2(v,w) + \frac1{2\tau} \bd^2(v,w) - \frac1{4\tau} \bd^2(u,w) + \nrg(w) \\
    \geq &  - \frac{1}{\tau_* -\tau} \bd^2(v,u_*) + \frac1{2\tau_*}\bd^2(u_*,w) -\frac1{2\tau} \bd^2(v,u)+ \nrg(w) \\
    \geq &  - \frac{1}{\tau_* -\tau} \bd^2(v,u_*) -\frac1{2\tau} \bd^2(v,u) + c_*.
  \end{align*}
  The last expression, which only depends on the given quantities $u$ and $v$,
  constitutes the sought for lower bound on $\psi$.
  Consequently,
  \begin{align*}
    \underline\psi := \inf_{w\in\dom}\psi(w) > -\infty.
  \end{align*}
  Now, choose a minimizing sequence $\left( w_n \right)_\ninN$ in $\dom$, 
  i.e., 
  \begin{align}
    \label{eq:precauchy}
    \psi( w_n)\searrow\underline\psi.    
  \end{align}
  We are going to prove that this is a Cauchy sequence.
  Towards that goal, we invoke Assumption (E3):
  specifically, for given indices $m$ and $n$, 
  we choose $\gamma_0= w_m$, $\gamma_1= w_n$, 
  and we define $ w_{m,n}:=\gamma_{\frac12}$, the midpoint of the respective curve joining $ w_m$ to $ w_n$.
  Then, by \eqref{eq:Stronglambdaconvex},
  \begin{align*}
    \psi( w_{m,n}) 
    \leq \frac12 \psi( w_m) + \frac12 \psi( w_n) - \frac{1}{8} \left( \frac3{2\tau} + \lambda\right) \bd^2(w_m, w_n).
  \end{align*}
  Since $\tau < \tau_*$ by hypothesis, and $3+2\lambda\tau_*\ge2$ thanks to \eqref{eq:hypos}, 
  this yields an estimate on the distance from $w_m$ to $w_n$:
  \begin{align*}
    \bd^2( w_m, w_n) 
    \le \frac{8\tau}{3+2\tau\lambda} \left( \psi(w_m)+ \psi( w_n) - 2 \psi(w_{m,n}) \right)
    \le \frac{8\tau}{3+2\tau\lambda}  \left( \psi(w_m)+ \psi( w_n) - 2  \underline\psi \right). 
  \end{align*}
  In view of \eqref{eq:precauchy}, this verifies the Cauchy property of $\left( w_n \right)_\ninN$.
  Consequently, and by completeness of $(\X,\bd)$, that sequence converges to a limit $w_*\in\X$.

  According to Assumption (E1), $\nrg$ is lower $\bd$-semi-continuous.
  Since the distance to a given point is clearly a continuous function, also $\psi$ is lower $\bd$-semi-continuous.
  By the usual argument
  \begin{align*}
    \underline\psi \le \psi(w_*) \le \liminf_{n\to\infty}\psi( w_n) = \underline\psi,
  \end{align*}
  we conclude that $\psi$ attains its infimum $\underline\psi$ at $w_*$, 
  i.e., $w_*$ is a minimizer.

  Uniqueness of the minimizer follows by Assumption (E3) as well:
  by the remarks following \eqref{eq:hypos}, $\psi$ is \emph{strictly} convex 
  along some curve that connects two potentially different minimizers.
  But that would mean that $\psi$ attains a value lower than that at the minimizers, a contradiction.
\end{proof}
In the following, we assume that discrete initial data $(u_\tau^{\minus 1},u_\tau^{0})$ are given for each $\tau\in(0,\tau_*)$,
and we consider the --- according to Theorem \ref{ExistenceBDF2} above --- well-defined family of discrete solutions $(u_\tau^k)_{k\in\N}$.
We recall that one of the key features of the implicit Euler method is that the energy values $\nrg(u_\tau^k)$ 
are  monotonically decreasing with $k$.
This is not quite the case for the BDF2 scheme at hand, but we can prove a slightly weaker property.
\begin{lmm}[Almost energy diminishing]\label{lem:EnergyDim}
  Each discrete solution $(u_\tau^k)_{k\in\N}$ satisfies
  \begin{align}
    \nrg(\uk) + \frac1{2\tau} \bd^2(\ukm,\uk) \leq \nrg(\ukm) + \frac{1}{4\tau} \bd^2(\ukzm,\ukm)
    \label{eq:EnergyDim}
  \end{align}
  at each step $k=1,2,\ldots$.
\end{lmm}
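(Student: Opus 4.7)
The plan is to exploit only the variational characterization of $\uk$ as the minimizer of $\Psi(\tau,\ukzm,\ukm;\cdot)$, combined with a single application of the triangle and Young inequalities. No convexity or semi-continuity of $\nrg$ is needed here beyond what is already built into the existence of the minimizer.

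First, I would compare the value of $\Psi(\tau,\ukzm,\ukm;\cdot)$ at $\uk$ with its value at the competitor $\ukm$. Since $\bd(\ukm,\ukm)=0$, this yields immediately
\begin{align*}
\frac1\tau\bd^2(\ukm,\uk)-\frac1{4\tau}\bd^2(\ukzm,\uk)+\nrg(\uk)\;\le\;-\frac1{4\tau}\bd^2(\ukzm,\ukm)+\nrg(\ukm),
\end{align*}
which I rearrange to
\begin{align*}
\nrg(\uk)+\frac1\tau\bd^2(\ukm,\uk)\;\le\;\nrg(\ukm)-\frac1{4\tau}\bd^2(\ukzm,\ukm)+\frac1{4\tau}\bd^2(\ukzm,\uk).
\end{align*}

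Second, the only remaining issue is the stray positive term $\frac1{4\tau}\bd^2(\ukzm,\uk)$ on the right. Here I would apply the triangle inequality $\bd(\ukzm,\uk)\le\bd(\ukzm,\ukm)+\bd(\ukm,\uk)$ together with the elementary bound $(a+b)^2\le2a^2+2b^2$ to get
\begin{align*}
\frac1{4\tau}\bd^2(\ukzm,\uk)\;\le\;\frac1{2\tau}\bd^2(\ukzm,\ukm)+\frac1{2\tau}\bd^2(\ukm,\uk).
\end{align*}
Substituting this into the previous display and cancelling $\frac1{2\tau}\bd^2(\ukm,\uk)$ on both sides gives exactly \eqref{eq:EnergyDim}.

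There is no real obstacle to overcome; the argument is essentially a one-line consequence of minimality plus a carefully balanced $(a+b)^2\le2a^2+2b^2$. What is worth emphasising in the writeup is that the constant $\tfrac{1}{4\tau}$ in front of $\bd^2(\ukzm,\ukm)$, coupled with the factor $\tfrac1\tau$ in front of $\bd^2(\ukm,\uk)$ in the definition of $\Psi$, is precisely what allows the absorption of the cross term with a full $\frac1{2\tau}\bd^2(\ukm,\uk)$ left over — this is the source of the stability estimate \eqref{eq:stable2} after telescoping the resulting one-step inequality from $k=m+1$ to $k=n$.
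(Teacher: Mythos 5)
Your argument is correct and coincides with the paper's own proof: both compare $\Psi(\tau,\ukzm,\ukm;\cdot)$ at the minimizer $\uk$ with the competitor $w=\ukm$ and then absorb the term $\frac1{4\tau}\bd^2(\ukzm,\uk)$ via the triangle inequality together with $(a+b)^2\le 2a^2+2b^2$ (this is exactly inequality \eqref{eq:binomf} in the paper). No gaps; the concluding remark about how the balance of constants feeds into the stability estimate \eqref{eq:stable2} is also accurate.
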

\begin{proof}
  Since $\uk$ is a minimizer of $w \mapsto \Psi(\tau,\ukzm,\ukm; w)$, it satisfies 
  \begin{align*}
    \Psi(\tau,u_\tau^{k-2},u_\tau^{k-1};u_\tau^{k}) \leq \Psi(\tau,u_\tau^{k-2},u_\tau^{k-1};w) 
  \end{align*}	
  for all $w\in\X$.
  For the choice $w= u_\tau^{k-1}$, we obtain
  \begin{align}
    \label{eq:predim}
    \frac{1}{\tau} \bd^2(\ukm,\uk) - \frac{1}{4\tau} \bd^2(\ukzm,\uk) + \nrg(\uk)  
    \leq - \frac{1}{4\tau} \bd^2(\ukzm,\ukm) + \nrg(\ukm).
  \end{align}
  By the triangle inequality and the binomial formula,
  \begin{align}
    \bd^2(\ukzm,\uk) \le 2\bd^2(\ukzm,\ukm) + 2\bd^2(\ukm,\uk). \label{eq:binomf}
  \end{align}
  Substitute this in the left-hand side of \eqref{eq:predim}.
  This yields \eqref{eq:EnergyDim}
\end{proof}
Next, we derive the classical estimates on energy and distance.
These estimates require some further assumptions on the discrete initial data $(u_\tau^{\minus 1},u_\tau^{0})$:
there are constants $K_0$, $K_1$ and $K_2$, such that, for all $\tau\in(0,\tau_*)$,
  \begin{itemize}
  \item[(I0)] $\bd(u_\tau^0,u^0)\le K_0\tau$, and
  \item[(I1)] $\nrg(u_\tau^0)\le K_1$ and $\nrg(u_\tau^{-1})\le K_1$, and
  \item[(I2)] $\bd(u_\tau^{\minus 1},u_\tau^{0})\le K_2\tau$.
  \end{itemize}
\begin{thrm}[Classical estimates] 
  \label{BoundsMMS}
  Fix a time horizon $T>0$.
  Under the aforementioned assumptions on the discrete initial data $(u_\tau^{\minus 1},u_\tau^{0})$,
  there is a constant $C$, depending only on $K_0$ to $K_2$ and $T$, 
  such that the corresponding discrete solutions $(u_\tau^k)_{k\in\N}$ satisfy
  \begin{align}
    \sum_{k=0}^{N} \frac{1}{2\tau} \bd^2(\ukm,\uk) & \leq  C , \label{boundsumdk} \\
    |\mathcal{E}(u_\tau^{N}) | & \leq C, \label{BoundEk} \\
    \bd^2(u_*,u_\tau^{N}) & \leq C \label{bounddn},
  \end{align}
  for all $N\in\N$ with $N\tau\le T$.
\end{thrm}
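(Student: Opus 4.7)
The plan is to exploit the almost-monotonicity of Lemma \ref{lem:EnergyDim} by packaging $\nrg$ together with the most recent squared jump into a genuinely monotone quantity, and then to close the loop via the coercivity assumption (E2).

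First I would set $F_k := \nrg(\uk) + \frac{1}{4\tau}\bd^2(\ukm,\uk)$. A direct rearrangement of \eqref{eq:EnergyDim} yields $F_k - F_{k-1} \leq -\frac{1}{4\tau}\bd^2(\ukm,\uk)$, so $(F_k)_{k\ge0}$ is nonincreasing, and telescoping from $k=1$ to $N$ produces
\[
\nrg(u_\tau^N) + \frac{1}{4\tau}\bd^2(u_\tau^{N-1},u_\tau^N) + \sum_{k=1}^N \frac{1}{4\tau}\bd^2(\ukm,\uk) \leq F_0.
\]
By assumptions (I1) and (I2), $F_0 \leq K_1 + K_2^2\tau_*/4 =: C_0$, which already yields the upper bound in \eqref{BoundEk} and, once we produce a lower bound on $\nrg(u_\tau^N)$, essentially delivers \eqref{boundsumdk}.

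For this lower bound, assumption (E2) gives $\nrg(u_\tau^N) \geq c_* - \frac{1}{2\tau_*}\bd^2(u_*,u_\tau^N)$. Writing $S_N := \frac1{4\tau}\sum_{k=1}^N \bd^2(\ukm,\uk)$, the triangle inequality followed by Cauchy--Schwarz yields, as long as $N\tau \leq T$,
\[
\bd^2(u_*,u_\tau^N) \leq 2\bd^2(u_*,u_\tau^0) + 2N\sum_{k=1}^N \bd^2(\ukm,\uk) \leq 2\bd^2(u_*,u_\tau^0) + 8T\,S_N.
\]
Plugging both bounds into the telescoped inequality gives
\[
S_N\Bigl(1 - \frac{4T}{\tau_*}\Bigr) \leq C_0 - c_* + \frac{1}{\tau_*}\bd^2(u_*,u_\tau^0),
\]
where the right-hand side is controlled by a constant depending on $K_0,K_1,K_2,c_*,\tau_*$ and $\bd(u_*,u^0)$ via (I0).

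For $T < \tau_*/4$ this closes the loop and directly delivers \eqref{boundsumdk}; then \eqref{bounddn} follows from the distance bound and \eqref{BoundEk} from combining it with (E2). For general $T$ I would iterate over consecutive subintervals of length $T_0 < \tau_*/4$: the monotonicity $F_k \leq F_{k-1}$ ensures that at each interval boundary the analogue of $F_0$ is again bounded by a constant, so the same argument applies inductively after $\lceil T/T_0 \rceil$ steps. The main obstacle is precisely this coupling step: coercivity reintroduces $\bd^2(u_*,u_\tau^N)$ with the wrong sign, and bounding it via the triangle inequality brings back $S_N$ with coefficient $4T/\tau_*$; this coefficient can only be absorbed when $T$ is small relative to $\tau_*$, making the iteration (or an equivalent discrete Gronwall argument) essentially unavoidable.
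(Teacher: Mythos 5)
Your argument is correct, and it reaches all three estimates. It starts exactly where the paper's proof starts --- telescoping the inequality of Lemma \ref{lem:EnergyDim}, which is the paper's \eqref{eq:bound1} --- and, like the paper, it identifies the real difficulty as the self-referential appearance of $\bd^2(u_*,u_\tau^N)$ once coercivity (E2) is invoked. The two proofs part ways in how they close that loop. The paper estimates the one-step increments via \eqref{eq:bound2}, $\bd^2(u_*,\uk)-\bd^2(u_*,\ukm)\le 2\bd(\ukm,\uk)\,\bd(u_*,\uk)$, splits by Young's inequality, and arrives at a per-step discrete Gronwall inequality with the term $\frac{8\tau}{\tau_*}\sum_{k\le N}\bd^2(u_*,\uk)$ on the right, closed by induction on $N$ with the factor $e^{8T/\tau_*}$. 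You instead use the triangle inequality plus Cauchy--Schwarz to get $\bd^2(u_*,u_\tau^N)\le 2\bd^2(u_*,u_\tau^0)+8T\,S_N$, absorb the $S_N$-term directly when $4T<\tau_*$, and handle general $T$ by iterating over $\lceil T/T_0\rceil$ blocks; the monotonicity of $F_k$ supplies the energy bound at each block boundary, and the constants compound geometrically over the (fixed, $\tau$-independent) number of blocks, which reproduces the same exponential-in-$T/\tau_*$ dependence as the paper's Gronwall factor. Both routes are legitimate; yours trades the per-step induction for a coarser absorption argument plus an outer iteration. The one point you should make fully explicit in a write-up is that the iterated absorption at block $j$ needs $\bd^2(u_*,u_\tau^{N_j})$ on its right-hand side, and this is \emph{not} controlled by $F_{N_j}\le F_0$; it must be carried forward from the distance bound \eqref{bounddn} established on the previous block, which is precisely where the geometric growth of the constants enters.
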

\begin{proof}
  The main estimate is easy to obtain: 
  sum up the inequalities \eqref{eq:EnergyDim} for $k=1$ to $k=N$.
  After cancellation of corresponding terms on both sides, we remain with
  \begin{align}
    \label{eq:bound1}
    \nrg(u_\tau^N)+\frac1{4\tau}\sum_{k=1}^N\bd^2(u_\tau^{k-1},u_\tau^{k})
    \le \nrg(u_\tau^0)+\frac1{4\tau}\bd^2(u_\tau^{\minus 1},u_\tau^{0})
    \le K_1+\frac14K_2^2,
  \end{align}
  where we have used the hypotheses (I1)\&(I2) for the last inequality.
  Clearly, if $\nrg$ would be bounded below, 
  then \eqref{boundsumdk}--\eqref{bounddn} would follow immediately.

  Since we only assume the weaker lower bound (E2), more work is required.
  First, we show that
  \begin{align}
    \label{eq:bound2}
    \bd^2(u_*,u_\tau^{k}) -\bd^2(u_*,u_\tau^{k-1}) \le 2\bd(\ukm,\uk)\bd(u_*,\uk).
  \end{align}
  We only need to consider the case that $\bd(u_*,u_\tau^{k})\ge\bd(u_*,u_\tau^{k-1})$,
  since otherwise the inequality is trivially true.
  But then an application of the triangle inequality yields:
  \begin{align*}
    \bd^2(u_*,u_\tau^{k}) -\bd^2(u_*,u_\tau^{k-1}) 
    &= \big(\bd(u_*,u_\tau^{k}) +\bd(u_*,u_\tau^{k-1})\big)\big(\bd(u_*,u_\tau^{k}) -\bd(u_*,u_\tau^{k-1})\big)\\
    &\le \big(\bd(u_*,u_\tau^{k})+\bd(u_*,u_\tau^{k})\big)\big(\bd(u_*,u_\tau^{k-1}) + \bd(u_\tau^{k-1},u_\tau^{k}) -\bd(u_*,u_\tau^{k-1})\big) \\
    &= 2\bd(\ukm,\uk)\bd(u_*,\uk),
  \end{align*}
  which is \eqref{eq:bound2}.
  We use \eqref{eq:bound2} and the binomial formula to estimate
  \begin{align*}
    \frac{1}{2}	\bd^2(u_*,u_\tau^{N}) - \frac{1}{2} \bd^2(u_*,u_\tau^{0}) 
    &=  \frac12 \sum_{k=1}^{N} \big[\bd^2(u_*,u_\tau^{k}) -\bd^2(u_*,u_\tau^{k-1})\big] \\ 
    &\leq \sum_{k=1}^{N}\bd(\ukm,\uk)\bd(u_*,\uk)
    \leq  \sum_{k=1}^{N} \frac{\tau_*}{8 \tau } \bd^2(\ukm,\uk) +  \sum_{k=1}^{N} \frac{2 \tau}{\tau_*} \bd^2(u_*, \uk) .
  \end{align*}
  At this point, we substitute estimate \eqref{eq:bound2} and use Assumption (E2) to obtain
  \begin{align*}
    \frac{1}{2}	\bd^2(u_*,u_\tau^{N}) - \frac{1}{2} \bd^2(u_*,u_\tau^{0}) 
    &  \leq \frac{ \tau_*}{2} \left( \nrg(u_\tau^0) - \nrg(u_\tau^{N}) +\frac1{4\tau}\bd^2(u_\tau^{\minus 1},u_\tau^{0})\right) 
      +  \frac{2 \tau}{\tau_*} \sum_{k=1}^{N} \bd^2(u_*, \uk) \\
    &\leq \frac{ \tau_*}{2} \left( \nrg(u_\tau^0) - c_* + \frac{1}{2\tau_*}\bd^2(u_*, u_\tau^{N}) +\frac1{4\tau}\bd^2(u_\tau^{\minus 1},u_\tau^{0})\right) 
      +  \frac{2 \tau}{\tau_*}\sum_{k=1}^{N} \bd^2(u_*, \uk).
  \end{align*}
  We rearrange terms and use (I0)--(I2) to arrive at the following time-discrete Gronwall inequality:
  \begin{align*}
    \bd^2(u_*,u_\tau^{N}) 
    \leq 2K_0^2 +  2 \tau_* \left( K_1 - c_* \right) + \frac{\tau_*}2K_2^2 + \frac{8 \tau  }{\tau_*}\sum_{k=1}^{N} \bd^2(u_*, \uk).
  \end{align*}
  One verifies by induction on $N$ that 
  \begin{align*}
    \bd^2(u_*,u_\tau^{N}) 
    \le \left[2K_0^2 +  2 \tau_* \left( K_1 - c_* \right) + \frac{\tau_*}2K_2^2\right]
    \left(1+\frac{8\tau}{\tau_*}\right)^N
    \le \hat C\exp\left(\frac{8N\tau}{\tau_*}\right)
    \le \hat C\exp\left(\frac{8T}{\tau_*}\right).
  \end{align*}
  In other words, we have proven \eqref{bounddn}.
  
  From here, we conclude \eqref{BoundEk}:
  the bound on $\nrg(u_\tau^N)$ from above follows immediately from \eqref{eq:bound1},
  for the bound from below, we combine \eqref{bounddn} with Assumption (E2).
  Having \eqref{BoundEk} at hand, the bound \eqref{boundsumdk} follows again from \eqref{eq:bound1}.
\end{proof}
\begin{rmrk}
  We emphasize that the boundedness of the discrete velocity \eqref{boundsumdk} 
  reflects a crucial stability property of the BDF2 scheme.
  Essentially, it prevents the discrete solutions to oscillate rapidly or diverge to infinity, at least in the eyes of the metric $\bd$, 
  which is typically rather weak, but still the key element for all further convergence analysis.
  The fact that the scheme allows such a stability estimate is by no means a triviality;
  indeed, it is a consequence of the intrinsic \emph{A-stability} of the BDF2 method for ordinary differential equations, 
  see e.g. \cite{dahlquist1956convergence,dahlquist1963special,BD,Hairer2013}.
  Since the implicit Euler (BDF1) and the BDF2 methods are the only two A-stable Backward-Differentiation-Formulas, 
  it cannot be expected that an estimate of the form \eqref{boundsumdk} can be proven for any higher order BDF$k$ method.
\end{rmrk}
As a final preparation for the convergence proof,
we derive a time-discrete version of the differential EVI \eqref{eq:dEVI}.
That estimate does not require any further assumptions on the discrete initial data.
\begin{lmm}[Discrete EVI]\label{lem:VarIneq}
  The discrete solution $\left( \uk \right)_\kinN$ satisfies 
  \begin{align}
    \label{eq:VarIneq}
    \begin{split}
      &\left(\frac3{4\tau}+\frac{\lambda}{2}\right) \bd^2(\uk,w)
      - \frac1{\tau} \bd^2(\ukm,w) + \frac1{4\tau} \bd^2(\ukzm,w) \\
     &\leq  \nrg(w)-\nrg(\uk) - \frac1{\tau} \bd^2(\ukm,\uk) + \frac1{4\tau}\bd^2(\ukzm, \uk). 
    \end{split}
  \end{align}			
  for all $\kinN$, and for all $w\in\dom$.
\end{lmm}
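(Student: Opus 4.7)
The plan is to combine the minimality of $\uk$ with the semi-convexity assumption (E3), applied to a curve joining $\uk$ to the arbitrary test point $w$. This is the standard way to extract a ``first-variation'' inequality from a variational characterization in a purely metric setting, where no differential calculus is available.

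First, I would fix $w\in\dom$ and invoke (E3) with reference points $u=\ukzm$, $v=\ukm$ and endpoints $\gamma_0=\uk$, $\gamma_1=w$. This supplies a continuous curve $(\gamma_s)_{s\in[0,1]}$ along which $s\mapsto\Psi(\tau,\ukzm,\ukm;\gamma_s)$ satisfies the semi-convexity inequality \eqref{eq:Stronglambdaconvex} with modulus $\tfrac{3}{2\tau}+\lambda$. Since $\gamma_0=\uk$ is the global minimizer of $\Psi(\tau,\ukzm,\ukm;\cdot)$ by \eqref{eq:BDF2scheme}, we automatically have $\Psi(\tau,\ukzm,\ukm;\gamma_0)\le\Psi(\tau,\ukzm,\ukm;\gamma_s)$ for every $s\in[0,1]$.

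Second, I would insert the minimality bound into the right-hand side of \eqref{eq:Stronglambdaconvex}, move the $(1-s)\Psi(\tau,\ukzm,\ukm;\gamma_0)$ term to the left, divide the resulting inequality by $s>0$, and pass to the limit $s\searrow 0$. This yields the clean first-order inequality
\begin{align*}
\Psi(\tau,\ukzm,\ukm;\uk) + \tfrac{1}{2}\left(\tfrac{3}{2\tau}+\lambda\right)\bd^2(\uk,w)
\le \Psi(\tau,\ukzm,\ukm;w).
\end{align*}
Note that $\bd^2(\gamma_0,\gamma_1)=\bd^2(\uk,w)$, so no information is lost in this step.

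Finally, I would unfold the definition of $\Psi$ on both sides, namely
$\Psi(\tau,\ukzm,\ukm;z)=\tfrac{1}{\tau}\bd^2(\ukm,z)-\tfrac{1}{4\tau}\bd^2(\ukzm,z)+\nrg(z)$, and rearrange the distance-squared terms so that those involving $w$ appear on the left and those involving $\uk$ on the right. A direct algebraic simplification then reproduces \eqref{eq:VarIneq}. The argument is essentially routine; the only conceptual point to be careful about is the passage $s\searrow 0$, which is permissible because the inequality after division by $s$ holds for every $s\in(0,1]$ and its right-hand side is affine in $s$. There is no substantial obstacle here, as the whole statement is a metric reformulation of the variational inequality satisfied by a minimizer of a uniformly convex functional.
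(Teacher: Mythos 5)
Your proposal is correct and follows exactly the paper's own argument: apply (E3) with reference points $\ukzm,\ukm$ and endpoints $\gamma_0=\uk$, $\gamma_1=w$, use the minimality of $\uk$ to bound $\Psi(\tau,\ukzm,\ukm;\gamma_s)-\Psi(\tau,\ukzm,\ukm;\uk)$ from below by zero, divide by $s$ and let $s\searrow0$, then unfold the definition of $\Psi$. No differences worth noting.
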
	
\begin{proof} 
  This follows from Assumption (E3).
  Choose $\gamma_0=\uk$ and $\gamma_1=w$, and let $(\gamma_s)_{s\in[0,1]}$ be the corresponding connecting curve 
  such that \eqref{eq:Stronglambdaconvex} holds.
  Combine \eqref{eq:Stronglambdaconvex} with the fact that $\uk$ minimizes $\Psi(\tau,\ukzm,\ukm,\cdot)$ to obtain,
  for each $s\in(0,1)$,
  \begin{align*}
    0  & \le \Psi(\tau,\ukzm,\ukm;\gamma_s) - \Psi(\tau,\ukzm,\ukm;\uk)  \\ 
       & \le s\Psi(\tau,\ukzm,\ukm;w) - s\Psi(\tau,\ukzm,\ukm;\uk) - \frac12 \left( \frac3{2\tau}+ \lambda\right) s(1-s) \bd^2(\uk,w) .
  \end{align*}
  Divide by $s\in(0,1)$ and pass to the limit $s\searrow 0$. 
  This yields
  \begin{align*}
    0 \leq \Psi(\tau,\ukzm,\ukm;w) - \Psi(\tau,\ukzm,\ukm;\uk) - \frac12 \left( \frac3{2\tau} + \lambda\right) \bd^2(\uk,w) ,
  \end{align*}
  which, by definition of $\Psi$, is the desired inequality \eqref{eq:VarIneq}.
\end{proof}


\section{Convergence}\label{sec:Proof}
\subsection{Statement of the Main Result}
Once again, we recall that $(\X,\bd)$ is a separable and complete metric space,
on which a functional $\nrg:\X\to\Ru$ is given, that satisfies Assumptions (E1)-(E3), with \eqref{eq:hypos}.
Our main result is the following.
\begin{thrm}[Convergence result] \label{thm:mainthm}
  Consider a vanishing sequence $(\tau_n)_\ninN$ of step sizes $\tau_n\in (0,\tau_*)$ that is strictly decreasing,
  and which is such that the quotients $\tau_{n}/\tau_{n+1}$ are natural numbers.
  Let further initial data $(u_{\tau_n}^{\minus 1},u_{\tau_n}^{0})$ be given 
  that satisfy the hypotheses (I0)--(I2) with appropriate $n$-independent constants $K_0$ to $K_2$,
  and in addition, there is a $K_3$ such that
  \begin{align}
    \label{eq:preroot}
    \bd(u_{\tau_n}^0,u^0)\le K_3 \tau_n.
  \end{align}
  For each $n$, the associated discrete solution $(u_{\tau_n}^k)_{k\in\N}$ is well-defined.

  Then the sequence of piecewise constant interpolations $(\bar{u}_{\tau_n})_{n\in\N}$ 
  converges locally uniformly with respect to time 
  to a solution $u_*\in \mathrm{AC}^2 \left( \left[0,\infty\right),\X\right)$ of the gradient flow for $\nrg$,
  i.e., the limit $u_*$ satisfies \eqref{eq:EVI}. 
  More precisely, for every time horizon $T>0$, 
  there is a constant $C$ that can be expressed in terms of $K_0$ to $K_3$ and $T$ alone,
  such that
  \begin{align}
    \label{eq:root}
    \bd\big( \bar{u}_{\tau_n}(t),u_*(t)\big)\leq C \sqrt{\tau_n}
  \end{align}
  for all $t\in[0,T]$.
\end{thrm}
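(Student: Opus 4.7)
The plan is to follow the philosophy of \cite[Section 4.1]{ags}, proving a Cauchy estimate directly between discrete solutions corresponding to different step sizes, with an explicit rate of order $\sqrt{\tau_n - \tau_m}$. This bypasses any attempt to compare $\bar u_{\tau_n}$ with a smooth continuous-time solution, which is the correct strategy in the abstract metric setting where no notion of differentiability of the limit curve is available a priori. The limit $u_*$ is then identified as a gradient flow by passing to the limit in the discrete EVI of Lemma \ref{lem:VarIneq}.

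Concretely, for fixed indices $n<m$, set $\tau:=\tau_n$, $\sigma:=\tau_m$, and $\ell:=\tau/\sigma\in\N$. I would compare the ``synchronized'' pairs $(u_\tau^k,u_\sigma^{\ell k})$ by applying Lemma \ref{lem:VarIneq} twice: once for the $\tau$-solution at step $k$ with test point $w=u_\sigma^{\ell k}$, and once for each of the $\ell$ $\sigma$-substeps that fit inside $(\tau(k-1),\tau k]$ with test point $w=u_\tau^k$. When these two inequalities are combined, the $\nrg(w)-\nrg(\,\cdot\,)$ contributions cancel up to terms that telescope in $k$, and what remains is a discrete differential inequality for an auxiliary quantity of the form
\[
E_k = \alpha\,\bd^2(u_\tau^k,u_\sigma^{\ell k}) + \beta\,\bd^2(u_\tau^{k-1},u_\sigma^{\ell(k-1)}),
\]
where the weights $\alpha,\beta>0$ must be chosen so that the combination of the two three-level left-hand sides appearing in \eqref{eq:VarIneq} produces a genuine first-order difference $E_k-E_{k-1}$, modulated by a $\lambda$-contractive correction.

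The main obstacle is the ``wrong-sign'' term $+\tfrac{1}{4\tau}\bd^2(u_\tau^{k-2},w)$ in the discrete EVI, which has no analogue in the BDF1 argument of \cite{ags} and is precisely the intrinsic source of the drop from order one to order one-half. In combination with the step-size mismatch between the two discrete solutions, it creates residual consistency contributions of size at most $O(\tau(\tau-\sigma))$ per step. These are controlled by means of the a~priori estimates of Theorem \ref{BoundsMMS}, in particular the bound \eqref{boundsumdk} on the cumulative kinetic energy and the almost-energy-diminishing property of Lemma \ref{lem:EnergyDim}. Choosing $\alpha,\beta$ correctly and solving a discrete Gronwall inequality over $O(T/\tau)$ steps should then yield $E_N\le C(\tau-\sigma)$ on any finite time horizon $[0,T]$, with $C$ depending only on $K_0,\dots,K_2$ and $T$.

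Taking a square root and using the initial estimate \eqref{eq:preroot} gives the Cauchy property of $\{\bar u_{\tau_n}\}$ in $C([0,T];\X)$ with rate $\sqrt{\tau_n}$, hence a limit $u_*\in\mathrm{AC}^2([0,\infty),\X)$ satisfying \eqref{eq:root}. Finally, to verify that $u_*$ obeys the integrated EVI \eqref{eq:EVI}, I would sum \eqref{eq:VarIneq} in the time index $k$ over a range corresponding to the interval $[s,t]$ and pass to the limit $n\to\infty$: the distance terms converge by the already established uniform convergence, the energy contribution is handled by lower semi-continuity \textbf{(E1)} (together with the upper bound \eqref{BoundEk} against Fatou-type arguments), and the remaining ``discrete kinetic'' residuals vanish because \eqref{boundsumdk} forces them to be $O(\tau_n)$ uniformly on $[s,t]$.
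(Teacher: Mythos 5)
Your overall strategy is the one the paper follows: a Cauchy-type comparison between discrete solutions at two different step sizes, obtained by testing the discrete EVI of Lemma \ref{lem:VarIneq} for each solution against the other, followed by a limit passage in the summed EVI to identify $u_*$; your final paragraph on absolute continuity and the integrated EVI is essentially the paper's argument. However, two points in the comparison step need attention. First, the Lyapunov combination $E_k=\alpha\,\bd^2(u_\tau^k,u_\sigma^{\ell k})+\beta\,\bd^2(u_\tau^{k-1},u_\sigma^{\ell(k-1)})$ with $\alpha,\beta>0$ cannot turn the three-level left-hand side of \eqref{eq:VarIneq} into a first-order difference $E_k-\rho E_{k-1}$: matching the coefficient of the $(k\minus 2)$-level term, which enters \eqref{eq:VarIneq} with the \emph{positive} weight $\frac1{4\tau}$, forces $\beta<0$. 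This is in effect what the paper does --- it factors the three-term recursion through its two characteristic roots $h_\tau^{-1}\ge1$ and $g_\tau\approx\frac13$ and works with $a_\tau^k(\bar u_\tau;w)=\bd^2(u_\tau^k,w)-h_\tau^{-1}\bd^2(u_\tau^{k-1},w)$ --- but with $\beta<0$ the quantity $E_k$ no longer dominates $\bd^2(u_\tau^k,u_\sigma^{\ell k})$, so a Gronwall bound on $E_N$ alone does not close the argument; the paper instead telescopes the weighted distances $q^{k,l}=h_\tau^kh_\eta^l\bd^2(u_\tau^k,v_\eta^l)$ over a staircase of index pairs and inserts the iterated recursion \eqref{eq:chain} for each increment.

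Second, and more substantially, the assertion that the $\nrg(w)-\nrg(\cdot)$ contributions ``cancel up to terms that telescope in $k$,'' leaving residuals of size $O(\tau(\tau-\sigma))$ per step, is precisely where all the work of the paper's proof of \eqref{eq:heart} lies, and it is not a telescoping. After cross-testing one is left with double sums of the form $\tau\sum_k\nrg(u_\tau^k)\,[\cdots]$ and $\eta\sum_l\nrg(v_\eta^l)\,[\cdots]$ in which each bracket is the difference of two geometric-type sums, each individually of size $O(1)$; since the energies are only known to be \emph{bounded} (by \eqref{BoundEk}), not small, one must exhibit the leading-order cancellation showing each bracket is $O(\tau)$, so that the total over $N\sim T/\tau$ steps is $O(\tau)$. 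This hinges on the sharp algebraic relations between $g_\tau$ and $h_\tau$ (cf.\ \eqref{eq:preelementary}--\eqref{eq:elementary}) and, in order to align the mismatched index sets of the two sums, on the almost-energy-diminishing Lemma \ref{lem:EnergyDim}, which lets one shift $\nrg(v_\eta^{R(k-1)})$ to $\nrg(v_\eta^l)$ at the cost of a kinetic term controlled by \eqref{boundsumdk}. Your sketch names all the right ingredients, but this coefficient cancellation is the crux of the theorem and must be carried out rather than assumed.
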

\begin{rmrk}
  The hypothesis that consecutive $\tau_n$'s have an integer quotient has been made 
  in order not to make the already quite technical proof even more technical.
  Under that hypothesis, the time grid associated to some $\tau_n$ is always a refinement of the grid for $\tau_m$ if $n>m$. 
  That simplifies our calculations considerably.
\end{rmrk}

%
\begin{rmrk}
  \label{rmk:counter}
  We give a simple example showing that under the given assumptions,
  in general one cannot expect second order convergence of the BDF2 method, 
  i.e., $\tau_n^2$ in place of $\sqrt{\tau_n}$ in \eqref{eq:root}.
  Our example is placed on the (very regular) metric space $\X=\R$ with the usual distance,
  with the convex but not globally differentiable potential
  \begin{align*}
    \nrg(u) = \begin{cases} u & \text{for $u\ge 0$}, \\ 0 & \text{for $u<0$}. \end{cases}
  \end{align*}
  The associated gradient flow with initial condition $u_0=1$ is the continuous curve
  \begin{align*}
    u_*(t) =
    \begin{cases}
      1-t & \text{for $0\le t\le1$}, \\
      0 &\text{for $t>1$},
    \end{cases}
  \end{align*}
  that fails to be differentiable at $t=1$.
  The solution $u_\tau^k$ to the $k$th minimization problem in \eqref{eq:BDF2scheme} is elementary to compute
  --- making a case distinction whether the minimizer is positive, negative or zero ---
  and explicitly given by
  \begin{align}
    \label{eq:recursion1}
    u_\tau^k = 
    \begin{cases}
      \frac43u_\tau^{k-1}-\frac13u_\tau^{k-2}-\frac23\tau & \text{if that expression is positive}, \\
      \frac43u_\tau^{k-1}-\frac13u_\tau^{k-2} & \text{if that expression is negative}, \\
      0 & \text{otherwise, i.e., if $0\le\frac43u_\tau^{k-1}-\frac13u_\tau^{k-2}\le\frac23\tau$}.
    \end{cases}
  \end{align}
  One easily concludes that for the initial conditions $u_\tau^0=1$ and $u_\tau^{-1}=1+\tau$,
  the $k$th approximation equals $u_\tau^k=1-k\tau$ as long as that expression is positive.
  Indeed, one has
  \begin{align*}
    \frac43u_\tau^{k-1}-\frac13u_\tau^{k-2} -\frac23\tau
    = \frac43\big(1-(k-1)\tau\big)-\frac13\big(1-(k-2)\tau\big) - \frac23\tau
    = 1-k\tau
    >0 ,
  \end{align*}
  so the first case in the recursion \eqref{eq:recursion1} applies.
  Accordingly, let $N_\tau$ be the smallest index $k\ge1$ for which $k\tau\ge1$.
  For simplicity, we assume that $u_\tau^{N_\tau}=0$, i.e., that the third case in \eqref{eq:recursion1} applies:
  \begin{align}
    \label{eq:numbertheory}
    1-N_\tau\tau = \frac43u_\tau^{N_\tau-1}-\frac13u_\tau^{N_\tau-2} -\frac23\tau \in\left[-\frac23\tau,0\right].
  \end{align}
  The other case, in which $-\tau<1-N_\tau\tau<-\frac23\tau$,
  leads to a similar result, but with more complicated formulae.
  Recalling that the two-step recursion $a_k=\frac43a_{k-1}-\frac13a_{k-2}$ has the general solution $a_k=p+3^{-k}q$
  with real parameters $p$ and $q$,
  one easily deduces from \eqref{eq:recursion1} in combination with $u_\tau^{N_\tau}=0$ 
  and $u_\tau^{N_\tau-1}=1-(N_\tau-1)\tau\in[\frac13\tau,\tau]$ because of \eqref{eq:numbertheory}
  that
  \begin{align*}
    u_\tau^k =\frac43 u_\tau^{k-1} - \frac13 u_\tau^{k-2}= -\frac12\big(1-3^{-(k-N_\tau)}\big)u_\tau^{N_\tau-1} \le -\frac16\big(1-3^{-(k-N_\tau)}\big)\tau <0
  \end{align*}
  for each index $k> N_\tau$.
  In conclusion, we have exact approximation for $t<1$, i.e.,
  \begin{align*}
    u_\tau^k = u_*(k\tau) \quad \text{for every $k$ with $k\tau<1$},
  \end{align*}
  but a residual of order $\tau$ at every point $t>1$:
  with indices $k_\tau(t)$ chosen such that $k_\tau(t)\tau\to t>1$ as $\tau\to0$,
  it follows that
  \begin{align*}
    \lim_{\tau\to0}\frac{u_*(t)-u_\tau^{k_\tau(t)}}\tau \ge \frac16\lim_{\tau\to0}\big(1-3^{-(k_\tau(t)-N_\tau)}\big)=\frac16.
  \end{align*}
  This clearly excludes the possibility of second order convergence.
\end{rmrk}


\subsection{Comparison Principle} 
The main ingredient of the proof of Theorem \ref{thm:mainthm} is the following comparison principle,
which estimates the rate at which two discrete solutions with almost identical initial data may diverge from each other.
\begin{thrm}[Comparison principle] \label{thm:CompRes}
  Let two time steps $\tau,\eta \in \left( 0 , \tau_*  \right)$ with $R:=\tau/\eta\in\N$ be given,
  and consider two pairs of initial data, $(u_\tau^{\minus 1},u_\tau^{0})$ and $(v_\eta^{\minus 1},v_\eta^{0})$.
  Let $K_0$ to $K_2$ be constants such that the Assumptions (I0)--(I2) are satisfied in both cases.
  Finally, let a time horizon $T>0$ be given.

  Then, there is a constant $C$, expressible in terms of $K_0$ to $K_2$ and $T$ alone,
  such that the piecewise constant interpolations $\bar u_\tau$ and $\bar v_\eta$ 
  of the corresponding discrete solutions $(\uk)_\kinN$ and $(v_\eta^l)_{l\in\N}$ satisfy
  \begin{align}
    \bd^2\big(\bar{u}_\tau(t) , \bar{v}_\eta(t)\big) \le C \left( \bd^2(u_\tau^0,v_\eta^0) + \tau \right)
    \label{eq:CompRes}
  \end{align}
  for all $t\in[0,T]$.
\end{thrm}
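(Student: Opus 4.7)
The proof plan is to run the standard doubling-of-variables argument from \cite[Section 4.1]{ags}, adapted to the two-step BDF2 recurrence. Introduce the shorthand $D_{k,l}:=\bd^2(u_\tau^k,v_\eta^l)$, $a_k:=\bd^2(u_\tau^{k-1},u_\tau^k)$, and $b_l:=\bd^2(v_\eta^{l-1},v_\eta^l)$. The first key step is to apply the discrete EVI \eqref{eq:VarIneq} twice---once to $(u_\tau^k)$ with test point $w=v_\eta^l$, and once to $(v_\eta^l)$ with test point $w=u_\tau^k$---and sum. The $\nrg$-terms cancel, and after using $\bd^2(u_\tau^{k-2},u_\tau^k)\le 2(a_{k-1}+a_k)$ together with the analogous bound for $v_\eta$, I am left with an inequality of the schematic form
\begin{align*}
  \bigl(\tfrac{3}{4\tau}+\tfrac{3}{4\eta}+\lambda\bigr)D_{k,l} + \tfrac{1}{4\tau}D_{k-2,l} + \tfrac{1}{4\eta}D_{k,l-2} \le \tfrac{1}{\tau}D_{k-1,l} + \tfrac{1}{\eta}D_{k,l-1} + \tfrac{1}{2\tau}(a_{k-1}-a_k) + \tfrac{1}{2\eta}(b_{l-1}-b_l),
\end{align*}
which is a two-dimensional BDF2-type recurrence on the $(k,l)$-grid whose mismatch terms telescope in each grid direction.

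Next, I would exploit the refinement hypothesis $R:=\tau/\eta\in\N$: the coarse-grid time $k\tau$ coincides with the fine-grid time $(kR)\eta$, so the natural quantity to estimate is $D_{k,kR}$, and more generally $D_{k,l}$ for $l$ close to $kR$. To convert the two-step recurrence above into a usable one-step recursion, I would follow the Dahlquist G-stability philosophy and introduce an auxiliary nonnegative quantity
\begin{align*}
  \mathcal{G}_{k,l} := \alpha D_{k,l} + \beta D_{k-1,l} + \gamma D_{k,l-1} + \delta D_{k-1,l-1},
\end{align*}
with weights $\alpha,\beta,\gamma,\delta\ge 0$ tuned so that the above inequality reorganizes into $\mathcal{G}_{k,l}\le\bigl(1+C(\tau+\eta)\bigr)\mathcal{G}_{k-1,l-1}+\text{(error)}$ between diagonally adjacent nodes, with the error being a telescoped combination of the $a_k$'s and $b_l$'s. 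By the a priori bound \eqref{boundsumdk} of Theorem \ref{BoundsMMS}, one has $\sum_k a_k\lesssim\tau$ and $\sum_l b_l\lesssim\eta\le\tau$, and this is exactly the origin of the additive $\tau$ in \eqref{eq:CompRes}. A discrete Gronwall argument along a monotone lattice path from $(0,0)$ to $(k(t),l(t))$ then yields $\mathcal{G}_{k(t),l(t)}\le C\bigl(\mathcal{G}_{0,0}+\tau\bigr)\exp(CT)$, which controls $\bd^2(\bar u_\tau(t),\bar v_\eta(t))$ and closes the proof after passing to the piecewise-constant interpolants.

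The main obstacle is precisely the identification of an admissible $\mathcal{G}_{k,l}$. Unlike the BDF1 situation in \cite[Section 4.1]{ags}, where the summed EVI already yields a genuine one-step contraction in $D_{k,l}$ up to error, the lagged terms $D_{k-2,l}$ and $D_{k,l-2}$ in the BDF2 kernel forbid any direct monotonicity statement in $D$ alone. The remedy, inspired by the quadratic-form G-norm trick used in the Hilbertian BDF2 proofs \cite{emmrich2005stability}, is to find a positive linear combination of neighboring $D$-values that \emph{is} monotone modulo the controlled telescoped errors; but here we must execute this algebra simultaneously in two grid directions with relative scale $R$, while keeping track of the negative modulus $\lambda$ (absorbed using \eqref{eq:hypos} exactly as in the well-posedness proof of Theorem \ref{ExistenceBDF2}). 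Once a suitable $\mathcal{G}_{k,l}$ is written down and the positivity of its left-hand-side coefficients is verified, the remaining discrete Gronwall is essentially routine.
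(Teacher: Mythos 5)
Your plan diverges from the paper's at the decisive point, and the divergence hides a genuine gap. You propose to add the two discrete EVIs at a common lattice node $(k,l)$ so that the energy terms cancel, and then to close a Gronwall loop via a G-type functional $\mathcal{G}_{k,l}$ satisfying $\mathcal{G}_{k,l}\le(1+C(\tau+\eta))\mathcal{G}_{k-1,l-1}+\text{err}$ between \emph{diagonally adjacent} nodes. For $R=\tau/\eta>1$ this structure cannot work as stated: the target node is $(N,M)$ with $M\approx RN$, which is not reachable from $(0,0)$ by diagonal unit steps, and the moment your path takes an $l$-only (or $k$-only) step you are using a single EVI, so the energy difference $\nrg(u_\tau^k)-\nrg(v_\eta^l)$ does \emph{not} cancel there. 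Summing the residual energy contributions over a block of one $k$-step and $R$ $l$-steps leaves a mismatch between $\nrg(v_\eta^{l'})$ at different fine-grid indices $l'$, weighted by the unequal prefactors $4\tau/(3+2\lambda\tau)$ and $4\eta/(3+2\lambda\eta)$; controlling this mismatch is precisely the content of the paper's estimate \eqref{eq:heart} (the terms $I_1$, $I_2$ in Section \ref{subsec:AuxCalc}), i.e., it is the bulk of the work, not something that disappears by symmetry.

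The second, independent gap is that the functional $\mathcal{G}_{k,l}$ is never exhibited, and its existence is not a routine algebraic verification. If you simply drop the nonpositive terms $-\tfrac1{4\tau}D_{k-2,l}-\tfrac1{4\eta}D_{k,l-2}$ from your schematic inequality, the left-hand coefficient of $D_{k,l}$ is $\tfrac34(\tfrac1\tau+\tfrac1\eta)+\lambda$ against a right-hand total of $\tfrac1\tau+\tfrac1\eta$, so the raw recursion is expansive with ratio $\approx 4/3$ per step and any naive 2D Gronwall blows up. The paper circumvents this not by a two-dimensional G-norm but by factorizing each one-dimensional BDF2 kernel: after normalization the characteristic polynomial $z^2-\tfrac{4}{3+2\lambda\tau}z+\tfrac1{3+2\lambda\tau}$ has roots $h_\tau^{-1}=1+O(\tau)$ and $g_\tau=\tfrac13+O(\tau)$, so the auxiliary quantity $a_\tau^k(\bar u_\tau;w)=\bd^2(u_\tau^k,w)-h_\tau^{-1}\bd^2(u_\tau^{k-1},w)$ obeys the contractive one-step recursion $a_\tau^k\le g_\tau a_\tau^{k-1}+\tau b_\tau^k$ for each \emph{fixed} observer $w$; this is iterated to $k=0$, and the difference $q^{N,M}-q^{0,0}$ with $q^{k,l}=h_\tau^kh_\eta^l\bd^2(u_\tau^k,v_\eta^l)$ is telescoped along a staircase path, at the cost of the non-cancelling energy sums mentioned above. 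Your observations that $\sum_k a_k\lesssim\tau$ via \eqref{boundsumdk} and that (I2) supplies the $O(\tau)$ initialization are correct and do account for the additive $\tau$ in \eqref{eq:CompRes}, but without either a concrete $\mathcal{G}_{k,l}$ compatible with a non-diagonal lattice path or a substitute for the paper's control of the energy sums, the argument does not close.
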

\begin{proof}
  The basic idea is to derive bounds on the distance between the discrete solutions $(u_\tau^k)_{k\in\N}$ and $(v_\eta^l)_{l\in\N}$
  at comparable times, i.e., for $(k-1)R\le l\le Rk$,
  by using the time-discrete EVI \eqref{eq:VarIneq} for each of the two solutions 
  and substituting the respective other solution for the ``observer point'' $w$.

  More specifically, 
  multiplication of \eqref{eq:VarIneq} for $u_\tau^k$ by $(4\tau)/(3+2\lambda\tau)$ yields
  \begin{equation}
    \label{eq:vi}
    \begin{split}
      &\bd^2(\uk,w) - \frac4{3+2\lambda\tau}\bd^2(\ukm,w) + \frac1{3+2\lambda\tau} \bd^2(\ukzm,w) \\
      &\leq  \frac{4\tau}{3+2\lambda\tau}\left(
        \nrg(w) - \nrg(\uk) - \frac1{\tau} \bd^2(\ukm,\uk) + \frac1{4\tau} \bd^2(\ukzm,\uk)
      \right).
    \end{split}
  \end{equation}
  For brevity, we introduce
  \begin{align*}
    g_\tau := \frac1{2+\sqrt{1-2\lambda\tau}} = \frac13+O(\tau),     \quad
    h_\tau := 2-\sqrt{1-2\lambda \tau} = 1+O(\tau), 
    \quad
    \lambda_\tau :=\frac{\log(h_\tau)}{\tau} = \lambda+O(\tau),
  \end{align*}
  where the Landau symbol $O(\tau)$ is understood for the limit $\tau \to 0$,
  and further
  \begin{align*}
    a_\tau^k(\bar{u}_\tau;w)&:=  \bd^2(\uk,w) - h_\tau^{-1} \bd^2(\ukm,w), \\
    b_\tau^k(\bar{u}_\tau;w)&:= \frac{4g_\tau}{h_\tau}                         \left( \mathcal{E}(w) - \mathcal{E}(\uk) - \frac1{\tau} \bd^2(\ukm,\uk) + \frac1{4\tau} \bd^2(\ukzm,\uk) \right).
  \end{align*}
  With these notations, the variational inequality \eqref{eq:vi} attains the following convenient form:
  \begin{align*}
    a^k_\tau(\bar{u}_\tau;w) \leq g_\tau \,  a^{k-1}_\tau(\bar{u}_\tau;w) + \tau b_\tau^k(\bar{u}_\tau;w).
  \end{align*}
  An iteration of this inequality yields
  \begin{align}
    \label{eq:chain}
    a^k_\tau(\bar{u}_\tau;w) \leq  g_\tau^k a^0_\tau(\bar{u}_\tau;w) + \tau\sum_{n=1}^k g_\tau^{k-n} \, b^n_\tau(\bar{u}_\tau;w). 
  \end{align}
  Analogously, define $g_\eta,h_\eta, \lambda_\eta$, as well as $a_\eta^l(\bar{v}_\eta;w),b_\eta^l(\bar{v}_\eta;w)$, 
  replacing $\ukzm,\ukm,\uk$ and $\tau$ with $v_\eta^{l-2},v_\eta^{l-1},v_\eta^{l}$ and $\eta$, respectively. 
  By the same argument as above, one obtains a corresponding estimate for $a^l_\eta(\bar{v}_\eta;w)$.

  Now fix a time $t \in [0,T]$, and define the three quantities 
  \[ N:=\max\{ n \mid n \tau \leq t\},\quad M:=\max\{ m \mid m\eta \leq t\}, \quad L:=M-RN.\]
  To simplify notations in the next calculations, introduce further
  \begin{align*}
    q^{k,l}:=h_\tau^kh_\eta^l\bd^2(u_\tau^k,v_\eta^l) 
    = e^{\lambda_\tau k\tau+\lambda_\eta l\eta}\bd^2\big(\bar u_\tau(k\tau),\bar v_\eta(l\eta)\big).
  \end{align*}
  The goal is to derive an estimate on the difference
  \begin{align*}
    q^{N,M} - q^{0,0} = h_\tau^N h_\eta^M \bd^2(u_\tau^N,v_\eta^{M}) - \bd^2(u_\tau^0,v_\eta^0).
  \end{align*}
  We expand this difference into telescopic sums:
  \begin{align*}
    q^{N,M}-q^{0,0} 
    &= \big(q^{N,M}-q^{N,RN}\big) + \sum_{k=1}^N\big(q^{k,Rk}-q^{k-1,R(k-1)}\big) \\
    &= \sum_{\ell=RN+1}^M(q^{N,\ell}-q^{N,\ell-1})
      + \sum_{k=1}^N\left( (q^{k,R(k-1)}-q^{k-1,R(k-1)}) + \sum_{\ell=R(k-1)+1}^{Rk}(q^{k,\ell}-q^{k,\ell-1}) \right).
  \end{align*}
  By definition of $a$ and $b$, the differences inside the sums satisfy
  \begin{align*}
    q^{k,l}-q^{k-1,l} = h_\tau^kh_\eta^la_\tau^k(\bar{u}_\tau;v_\eta^l),
    \qquad
    q^{k,l}-q^{k,l-1} = h_\tau^kh_\eta^la_\eta^l(\bar{v}_\eta;u_\tau^k).
  \end{align*}
  Insert this above and use the estimates \eqref{eq:chain} to obtain
  \begin{align}
    \nonumber
    h_\tau^N h_\eta^M \bd^2(u_\tau^N,v_\eta^{M}) - \bd^2(u_0,v_0) 
    \leq&\ I_{\tau,\eta}^{N,M}(\bar{u}_\tau,\bar{v}_\eta) \\
    \label{eq:I1}
        :=& \sum_{l=RN+1}^M h_\tau^N h_\eta^l 
          \left[ g_\eta^l a^0_\eta(\bar{v}_\eta;u_\tau^N) + \eta\sum_{n=1}^l  g_\eta^{l-n} \ b^n_\eta(\bar{v}_\eta;u_\tau^N) \right] \\
    \label{eq:I2}
        & + \sum_{k=1}^N h_\tau^k h_\eta^{R(k-1)}
          \left[ g_\tau^k a^0_\tau(\bar{u}_\tau;v_\eta^{R(k-1)}) + \tau\sum_{n=1}^k  g_\tau^{k-n} \ b^n_\tau(\bar{u}_\tau;v_\eta^{R(k-1)}) \right] 	\\
    \label{eq:I3}
        & + \sum_{k=1}^N \sum_{l=R(k-1)+1}^{Rk} h_\tau^k h_\eta^{l} 
          \left[ g_\eta^l a^0_\eta(\bar{v}_\eta;u_\tau^k) + \eta\sum_{n=1}^l g_\eta^{l-n} \ b^n_\eta(\bar{v}_\eta;u_\tau^k) \right] .
  \end{align}
  The core part of the proof of Theorem \ref{thm:CompRes} is to show that under the given hypotheses,
  \begin{align}
    \label{eq:heart}
    I_{\tau,\eta}^{N,M}(\bar{u}_\tau,\bar{v}_\eta) \le C'\tau.
  \end{align}
  The proof of \eqref{eq:heart} can be found at the end of this section. 
  In conclusion, we have 
  \begin{align*}
    e^{\lambda_\tau t+\lambda_\eta t} \bd^2(\bar{u}_\tau(t) , \bar{v}_\eta(t))  
    \leq h_\tau^N h_\eta^M \bd^2(u_\tau^N,v_\eta^{M}) 
    \leq C'\tau + \bd^2(u_0,v_0),
  \end{align*}
  which implies the inequality \eqref{eq:CompRes} with $C=e^{-2\lambda T}(1+C')$.
\end{proof}

\subsection{Proof of the Main Theorem}\label{subsec:Convergence}
With Theorem \ref{thm:CompRes} at hand, we finish the proof of Theorem \ref{thm:mainthm}. 
\begin{proof}[Proof of Theorem \ref{thm:mainthm}] 
  From Theorem \ref{thm:CompRes} it follows that $(\bar{u}_{\tau_n})_{\ninN}$ is a Cauchy family
  with respect to uniform convergence on each interval $[0,T]$.
  Indeed, since $\tau_n/\tau_m\in\N$ for arbitrary $m\ge n$ by hypothesis,
  and since the Assumptions (I0)--(I2) are satisfied with $n$-independent constants $K_0$ to $K_2$,
  there is a constant $C$ independent of $n$ and $t\in[0,T]$, 
  such that according to \eqref{eq:CompRes}, and thanks to hypothesis \eqref{eq:preroot}:
  \begin{align*}
    \bd^2(\bar{u}_{\tau_m}(t),\bar{u}_{\tau_n}(t))  
    \leq C (\bd^2(u_{\tau_m}^0,u_{\tau_n}^0) + \tau_n ) \le C(1+K_3^2)\tau_n.
  \end{align*}
  It follows that the values $(\bar u_{\tau_n}(t))_{n\in\N}$ converge in the complete metric space $(\X,\bd)$ 
  to a limit $u_*(t)$, uniformly for $t\in[0,T]$,
  and that the estimate \eqref{eq:root} holds.
  Since this argument holds for arbitrary $T>0$, the limit $u_*(t)$ is defined for all $t\ge0$.

  To prove absolute continuity of the limit curve $u_*$, we argue as usual: 
    we assign time-discrete derivatives $|u'_{\tau_n}|$ to the interpolated solutions $\bar{u}_{\tau_n}(t)$ 
  by
  \begin{align*}
    |u'_{\tau_n}|(t) := \frac{\bd(\bar{u}_{\tau_n}(t-\tau_n), \bar{u}_{\tau_n}(t))}{\tau_n}
    = \frac{\bd^2(u_{\tau_n}^{k-1},u_{\tau_n}^{k})}{\tau_n} \qquad \text{for $t \in ((k-1)\tau_n, k \tau_n ]$}.
  \end{align*}
  Thanks to the classical estimate \eqref{boundsumdk}, $|u'_{\tau_n}|$ is uniformly bounded in $L^2(0,T)$.
  Hence, $|u'_{\tau_n}|$ possesses a $L^2(0,T)$-weakly convergent subsequence (not relabelled) with limit $A \in L^2(0,T)$. 
  Choose arbitrary $s,t$ with $0\leq s \leq t \leq T$, and define $k_n(r):= \max\{k | k\tau_n \leq r\}$, 
  then 
  \begin{align*}
    \bd(\bar{u}_{\tau_n}(s),\bar{u}_{\tau_n}(t)) 
    \leq \sum_{k=k_n(s)+1}^{k_n(t)} \bd(u_{\tau_n}^{k-1},u_{\tau_n}^{k}) 
    = \int_{k_n(s) \tau_n}^{k_n(t) \tau_n} |u'_{\tau_n}| (r)\mathrm{d}r .
  \end{align*}
  In the limit $n \to \infty$, this yields
  \begin{align*}
    \bd(u_*(s),u_*(t)) = \lim_{n \to \infty} \bd(\bar{u}_{\tau_n}(s),\bar{u}_{\tau_n}(t)) 
    \leq \lim_{n \to \infty} \int_{k_{n}(s) \tau_n}^{k_{n}(t) \tau_n} |u'_{\tau_n}| 
    = \int_s^t A(r) \mathrm{d}r.
  \end{align*}
  Hence $u_* \in \mathrm{AC}^2([0,\infty),\X)$.

  It remains to prove that the limit curve $u_*$ satisfies the integrated form \eqref{eq:EVI} of the EVI. 
  Again, let $0\le s\le t\le T$, and define $k_n(r)$ be as above.
  Multiply the time-discrete EVI \eqref{eq:VarIneq} for $(u_{\tau_n}^k)_{k\in\N}$ by $\tau_n$,
  and sum from $k=k_n(s)+1$ to $k=k_n(t)$.
  On the left-hand side, we obtain after elementary manipulations:
  \begin{align*}
    J^{(1)}_n(s,t)
    &:=\tau_n\sum_{k=k_n(s)+1}^{k_n(t)}\left[
      \left(\frac3{4\tau_n}+\frac{\lambda}{2}\right) \bd^2(u_{\tau_n}^k,w)
      - \frac1{\tau_n} \bd^2(u_{\tau_n}^{k-1},w) + \frac1{4\tau_n} \bd^2(u_{\tau_n}^{k-2},w)\right] \\
    &=\frac{\lambda}2\int_{k_n(s)\tau_n}^{k_n(t)\tau_n}\bd^2(\bar u_{\tau_n}(r),w)\dd r \\
    &\qquad +\frac14\left[\big(3\bd^2(u_{\tau_n}^{k_n(t)},w)-\bd^2(u_{\tau_n}^{k_n(t)-1},w)\big) 
      - \big(3\bd^2(u_{\tau_n}^{k_n(s)},w)-\bd^2(u_{\tau_n}^{k_n(s)-1},w)\big)\right].
  \end{align*}
  Thanks to the $r$-uniform convergence of $\bar u_{\tau_n}(r)$ to $u_*(r)$,
  and since $u_*$ is continuous, we obtain in the limit
  \begin{align*}
    \lim_{n\to\infty}J^{(1)}_n(s,t) 
    = \frac\lambda2\int_s^t\bd^2\big(u_*(r),w\big)\dd r + \frac12\bd^2\big(u_*(t),w\big) - \frac12\bd^2\big(u_*(s),w\big).
  \end{align*}
  On the other hand, 
  after summation of the right-hand side of \eqref{eq:VarIneq}, 
  we estimate once again with the help of the elementary inequality \eqref{eq:binomf}
  and thus obtain
  \begin{align*}
    J^{(2)}_n(s,t)&:=\tau_n\sum_{k=k_n(s)+1}^{k_n(t)}\left[\nrg(w)-\nrg(u_{\tau_n}^k) 
      - \frac1{\tau_n} \bd^2(u_{\tau_n}^{k-1}, u_{\tau_n}^{k}) + \frac1{4\tau_n}\bd^2(u_{\tau_n}^{k-2}, u_{\tau_n}^{k})\right] \\
    &\le \int_{k_n(s)\tau_n}^{k_n(t)\tau_n}\big[\nrg(w)-\nrg\big(\bar u_{\tau_n}(r)\big)\big]\dd r
      - \frac12\bd^2(u_{\tau_n}^{k_n(t)-1},u_{\tau_n}^{k_n(t)})+ \frac12\bd^2(u_{\tau_n}^{k_n(s)-1},u_{\tau_n}^{k_n(s)}) .
  \end{align*}
  Again, thanks to local uniform convergence of $\bar u_{\tau_n}$ to the continuous limit $u_*$,
  and since $\nrg$ is lower semi-continuous thanks to Assumption (E1),
  Fatou's lemma yields that
  \begin{align*}
    \lim_{n\to\infty}J^{(2)}_n(s,t)  \le \int_s^t \big[\nrg(w)-\nrg\big(u_*(r)\big)\big]\dd r.
  \end{align*}
  Since $J^{(1)}_n(s,t) \le J^{(2)}_n(s,t)$ for all $n$ by \eqref{eq:VarIneq},
  the respective inequality follows for the limits, that is
  \begin{align*}
    \frac\lambda2\int_s^t\bd^2\big(u_*(r),w\big)\dd r + \frac12\bd^2\big(u_*(t),w\big) - \frac12\bd^2\big(u_*(s),w\big)
    \le \int_s^t \big[\nrg(w)-\nrg\big(u_*(r)\big)\big]\dd r.
  \end{align*}
  This implies the integrated EVI \eqref{eq:EVI}.
\end{proof}

\subsection{Proof of the Estimate \eqref{eq:heart}}\label{subsec:AuxCalc} 
This is a purely technical part of the convergence proof,
that uses only elementary inequalities and the classical estimates \eqref{boundsumdk}--\eqref{bounddn}.
Throughout this section, we adopt the convenient notation that $C$ is a generic constant,
which is in principle expressible in terms of the initial data $u_0$, $v_0$ and the terminal time $T$ alone,
and whose value may change from one line to the next.

To begin with, observe that since we assumed $\lambda\le0$,
we have that $g_\tau\le\frac13$ and $g_\eta\le\frac13$, and therefore
\begin{align}
 \label{eq:gseries}
 \sum_{k=0}^\infty g_\tau^k \le \frac32,
 \quad
 \sum_{l=0}^\infty g_\eta^l \le \frac32.
\end{align}
Further, we have that $h_\tau\le1$ and $h_\eta\le1$,
which means that
\begin{align}
 \label{eq:hunity}
 h_\tau^kh_\eta^l\le1
\end{align}
for arbitrary $k,l\ge0$.
On the other hand, since $h_\tau^{-1}\ge1$ and due to \eqref{eq:bound2}, it follows that
\begin{align*}
 a_\tau^0(\bar{u}_\tau;w)
 &= \bd^2(u_\tau^0,w) - h_\tau^{-1}\bd^2(u_\tau^{-1},w) \le \bd^2(u_\tau^0,w) - \bd^2(u_\tau^{-1},w) \le 2 \bd(u_\tau^{\minus 1},u_\tau^0) \bd(u_\tau^0,w)
\end{align*}
Substituting $w:=v_\eta^l$, we obtain by the triangle inequality, and thanks to estimate \eqref{bounddn},
that
\begin{align}
  \label{eq:a}
  a_\tau^0(\bar{u}_\tau;v_\eta^l)
  \le 2 \bd(u_\tau^{\minus 1},u_\tau^{0}) \big[\bd(u_*,u_\tau^0)+\bd(u_*,v_\eta^l)\big]
  \le C\bd(u_\tau^{\minus 1},u_\tau^{0})
  \le CK_2\tau,
\end{align}
where we have used that (I2) holds with constant $K_2$.
Analogously, one derives
\begin{align}
  \label{eq:b}
  a_\tau^0(\bar{v}_\eta;u_\tau^k) \le C\eta.
\end{align}
With \eqref{eq:gseries}, \eqref{eq:hunity}, \eqref{eq:a} and \eqref{eq:b} at hand,
it is now straight-forward to estimate of the terms involving $a_\tau^0$ or $a_\eta^0$.
For the expession in \eqref{eq:I1},
\begin{align*}
 \sum_{l=RN+1}^M h_\tau^Nh_\eta^lg_\eta^la^0_\eta(\bar{v}_\eta;u_\tau^N)
  \le \sum_{l=RN+1}^Mg_\eta^l C\eta
  \le\frac32C\eta.
\end{align*}
For \eqref{eq:I2},
\begin{align*}
  \sum_{k=1}^N h_\tau^kh_\eta^{R(k-1)}g_\tau^ka_\tau^0(\bar{u}_\tau;v_\eta^{R(k-1)})
  \le \sum_{k=1}^Ng_\tau^k C\tau
  \le\frac32C\tau.
\end{align*}
And finally, for \eqref{eq:I3},
\begin{align*}
  \sum_{k=1}^N\sum_{l=R(k-1)+1}^{Rk}h_\tau^kh_\eta^lg_\eta^la_\eta^0(\bar{v}_\eta;u_\tau^k)
  \le \sum_{l=1}^{RN}g_\eta^l C\eta
  \le\frac32C\eta.
\end{align*}
We turn to estimate the terms involving $b_\tau^k$ and $b_\eta^l$.
First, we use \eqref{eq:binomf} again to get a first estimate 
\begin{align*}
 b_\tau^k(\bar{u}_\tau;w) 
  \le \frac{4g_\tau}{h_\tau}\left(\nrg(w) - \nrg(u_\tau^k) +\frac1{2\tau}\bd^2(u_\tau^{k-2},u_\tau^{k-1})\right) 
  \le \frac{4g_\tau}{h_\tau}\left(\nrg(w) - \nrg(u_\tau^k) \right) +\frac2{\tau}\bd^2(u_\tau^{k-2},u_\tau^{k-1}),
\end{align*}
where we have used that $\frac{g_\tau}{h_\tau} = \frac1{3+2\lambda\tau} \le 1$.
Next, we begin by estimating the terms related to the metric.
This is done using the classical estimate \eqref{boundsumdk}:
for the expression in \eqref{eq:I1},
\begin{align*}
2\eta\sum_{l=RN+1}^Mh_\tau^Nh_\eta^l\sum_{n=1}^lg_\eta^{l-n}\frac{\bd^2(v_\eta^{n-2},v_\eta^{n-1})}{\eta}
 \le 2\eta\sum_{l=RN+1}^M\sum_{n=1}^l\frac{\bd^2(v_\eta^{n-2},v_\eta^{n-1})}{\eta} 
 \le 2R\eta\sum_{l=1}^M \frac{\bd^2(v_\eta^{l-2},v_\eta^{l-1})}{\eta}
 \le C\tau.
\end{align*}
Here we have used that $M-NR\le R$ and that $R\eta=\tau$.
For \eqref{eq:I2},
\begin{align*}
2\tau\sum_{k=1}^Nh_\tau^kh_\eta^{R(k-1)}\sum_{n=1}^kg_\tau^{k-n}\frac{\bd^2(u_\tau^{n-2},u_\tau^{n-1})}{\tau}
 \le 2 \tau \sum_{k=1}^N\left[\left(\sum_{n=k}^Ng_\tau^{n-k}\right) \frac{\bd^2(u_\tau^{k-2},u_\tau^{k-1})}{\tau}\right]
 \le \frac{3}2 C\tau.
\end{align*}
Finally, for \eqref{eq:I3},
\begin{align*}
2\eta\sum_{k=1}^N\sum_{l=R(k-1)}^{Rk}h_\tau^kh_\eta^l\sum_{n=1}^lg_\eta^{l-n}\frac{\bd^2(v_\eta^{n-2},v_\eta^{n-1})}{\eta}
 &\le 2\eta\sum_{l=0}^{RN}\sum_{n=1}^{l}g_\eta^{l-n}\frac{\bd^2(v_\eta^{n-2},v_\eta^{n-1})}{\eta} \\
 &\le 2\eta\sum_{l=1}^{RN}\left[\left(\sum_{n=l}^{RN}g_\eta^{n-l}\right) \frac{\bd^2(v_\eta^{l-2},v_\eta^{l-1})}{\eta}\right]
 \le \frac{3}2C\eta.
\end{align*}
The estimates on the expressions involving the differences of the energy values are a bit more involved.
To simplify calculations, we use that the $b$'s only contain the difference between two values of $\nrg$;
hence adding a constant to $\nrg$ does not change the $b$ values.
Consequently, since $\nrg(u_\tau^k)$ and $\nrg(v_\eta^l)$ are bounded from below thanks to \eqref{BoundEk},
we may assume without loss of generality that all $\nrg(u_\tau^k)$ and $\nrg(v_\eta^l)$ are non-negative.

The contribution of the $\nrg$ terms to \eqref{eq:I1} is immediately controlled,
recalling \eqref{eq:gseries}, \eqref{eq:hunity}, and that $M<R(N+1)$:
\begin{align*}
 \eta\sum_{l=RN+1}^Mh_\tau^Nh_\eta^l\sum_{n=1}^lg_\eta^l \frac{4g_\eta}{h_\eta}\big[\nrg(u_\tau^N)-\nrg(v_\eta^n)\big]
 \le R\eta \nrg(u_\tau^N)4 \sum_{n=0}^{M-1}g_\eta^l
 \le \frac32C\tau.
\end{align*}
Collecting all terms containing evaluations of $\nrg$ in \eqref{eq:I2} and \eqref{eq:I3} yields the following:
\begin{align*}
&\tau\sum_{k=1}^Nh_\tau^kh_\eta^{R(k-1)}\sum_{n=1}^kg_\tau^{k-n}\frac{4g_\tau}{h_\tau}\big[\nrg(v_\eta^{R(k-1)})-\nrg(u_\tau^n)\big]
+\eta\sum_{k=1}^N\sum_{l=R(k-1)+1}^{Rk}h_\tau^kh_\eta^l\sum_{n=1}^lg_\eta^{l-n} \frac{4g_\eta}{h_\eta}\big[\nrg(u_\tau^k)-\nrg(v_\eta^n)\big] \\
 =& 4\tau\sum_{k=1}^N\nrg(u_\tau^k)\left[\frac1R \frac{g_\eta}{h_\eta}\sum_{l=R(k-1)+1}^{Rk}h_\tau^k h_\eta^l\sum_{n=1}^l g_\eta^{l-n}
   - \frac{g_\tau}{h_\tau} \sum_{n=k}^N h_\tau^n h_\eta^{R(n-1)}g_\tau^{n-k}\right] \\
 &+4\tau\sum_{k=1}^N \left[\nrg(v_\eta^{R(k-1)})h_\tau^kh_\eta^{R(k-1)}\frac{g_\tau}{h_\tau}\sum_{n=1}^kg_\tau^{k-n} \right]
   - 4\eta\sum_{k=1}^{N} \sum_{l=R(k-1) +1}^{Rk} \sum_{n=1}^{l}h_\tau^k h_\eta^l g_\eta^{l-n} \frac{g_\eta}{h_\eta} \nrg(v_\eta^n).
\end{align*}
For the sum involving $\nrg(u_\tau^k)$, we obtain
\begin{align*}
I_1:=&\,\tau\sum_{k=1}^N\nrg(u_\tau^k)\,h_\tau^kh_\eta^{R(k-1)}
\left[\frac1R\frac{g_\eta}{h_\eta}\sum_{l=1}^R\left(h_\eta^l\sum_{n=0}^{R(k-1)+l-1}g_\eta^n\right)
 - \frac{g_\tau}{h_\tau} \sum_{n=0}^{N-k}(h_\tau h_\eta^Rg_\tau)^n\right] \\
 \le & \tau\sum_{k=1}^N\nrg(u_\tau^k)\,h_\tau^kh_\eta^{R(k-1)}
 \left[\frac1R \frac{1}{3h_\eta}\left(\sum_{l=1}^R\frac32\right)- g_\tau \frac{1-(h_\tau h_\eta^Rg_\tau)^{N-k+1}}{1-h_\tau h_\eta^Rg_\tau}\right] \\
 \le &\tau\sum_{k=1}^N\nrg(u_\tau^k)\,h_\tau^kh_\eta^{R(k-1)}
 \left[\frac1{2h_\eta}-\frac{g_\tau}{1-h_\tau h_\eta^Rg_\tau}+\frac{g_\tau^{N-k+1}}{1-h_\tau h_\eta^Rg_\tau}\right].
\end{align*}
Recalling that $-1\le\lambda\tau\le0$, and observing that both $g_\tau$ and $h_\tau$ are convex functions of $\tau$,
a Taylor expansion yields that
\begin{align}
  \label{eq:preelementary}
 g_\tau\ge\frac13+\frac{\lambda\tau}9\ge0,\quad
 h_\tau\ge1+\lambda\tau\ge0,
\end{align}
and similarly for $g_\eta$ and $h_\eta$.
Therefore, in combination with Bernoulli's inequality,
\begin{align*}
 h_\tau h_\eta^Rg_\tau \ge (1+\lambda\tau)(1+\lambda\eta)^R\left(\frac13+\frac{\lambda\tau}9\right)
 \ge \frac13(1+\lambda\tau)^3
 \ge \frac13\left(1+3\lambda\tau\right).
\end{align*}
Now monotonicity and convexity of the function $x\mapsto\frac1{1-x}$ lead to
\begin{align*}
 \frac1{1-h_\tau h_\eta^Rg_\tau}
 \ge \frac{1}{1-\frac13(1+3\lambda \tau)}
 = \frac32 \frac{1}{1-\frac32\lambda \tau}
 \ge \frac32 \left(1 + \frac32 \lambda \tau\right).
\end{align*}
We combine this with another application of \eqref{eq:preelementary}
and the observation that $h_\eta\ge1/(1-2\lambda\eta)$ thanks to \eqref{eq:hypos}
to arrive at:
\begin{align}
  \label{eq:elementary}
  \frac1{2h_\eta} - \frac{g_\tau}{1-h_\tau h_\eta^Rg_\tau}
  \le \frac{1}{2} (1 -2\lambda \eta) -\frac13\left(1+\frac13\lambda \tau\right)\frac32\left( 1 + \frac32 \lambda \tau\right)
  \le \frac12- \lambda \eta- \frac12 - \frac{11}{6}\lambda \tau \le - 3 \lambda \tau.
\end{align}
This yields, in combination with the bound \eqref{BoundEk} on $\nrg$,
\begin{align*}
 I_1
 &\le \tau\sum_{k=1}^N\nrg(u_\tau^k)\,h_\tau^kh_\eta^{R(k-1)}\left[\frac{3g_\tau^{N-k+1}}{1-h_\tau h_\eta^Rg_\tau} -3\lambda\tau\right]\\
 &\le \tau C\sum_{k=1}^N\left[\frac92g_\tau^{N-k+1}-3\lambda\tau\right]
 \le \tau C\left(\frac{27}{4}-3\lambda\tau N \right)
 \le C[1+(-\lambda)T]\ \tau.
\end{align*}
We turn to the sums involving values of the form $\nrg(v_\eta^l)$, i.e.,
\begin{align*}
 I_2:=
        \tau\sum_{k=1}^N \left[\nrg(v_\eta^{R(k-1)})h_\tau^kh_\eta^{R(k-1)} \frac{g_\tau}{h_\tau}\sum_{n=1}^kg_\tau^{k-n} \right]
   - \eta\sum_{k=1}^{N} \sum_{l=R(k-1) +1}^{Rk} \sum_{n=1}^{l}h_\tau^k h_\eta^l g_\eta^{l-n} \frac{g_\eta}{h_\eta} \nrg(v_\eta^n) .
\end{align*}
In order to join these two sums into a single one --- similar to the sum involving $\nrg(u_\tau^k)$ above ---
we are going to apply a small shift to the indices inside $\nrg(v_\eta^{R(k-1)})$.
To this end, observe that an iteration of the energy estimate \eqref{eq:EnergyDim} yields
\begin{align*}
\nrg(v_\eta^{Rk})\le\nrg(v_\eta^l)+\frac1{4\eta}\bd^2(v_\eta^{Rk-1},v_\eta^{Rk}) 
\end{align*}
as soon as $0\le l\le Rk$.
Further, for such $k$ and $l$, we have $h_\tau^k\le h_\tau^{l/R}$ since $h_\tau\le1$.
This allows us to estimate the first sum in $I_2$ as follows:
\begin{align*}
 \tau\sum_{k=1}^N \left[\nrg(v_\eta^{R(k-1)})h_\tau^kh_\eta^{R(k-1)} \frac{g_\tau}{h_\tau} \sum_{n=1}^kg_\tau^{k-n} \right]
 & \leq \tau \nrg(v^0) + \frac{\tau}{2h_\tau} \sum_{k=1}^{N} h_\tau^{k+1} h_\eta^{Rk} \nrg(v_\eta^{Rk})  \\
 & \leq \tau C + \frac{\tau}{2Rh_\tau} \sum_{k=1}^N \sum_{l=R(k-1)+1}^{Rk} h_\tau^{l/R+1} h_\eta^l\left[\nrg(v_\eta^l) + \frac1{4\eta} \bd^2(v_\eta^{Rk-1},v_\eta^{Rk}) \right] \\
 & \leq \tau C + \frac{\eta}{2h_\tau} \sum_{l=1}^{RN} h_\tau^{l/R+1} h_\eta^l\nrg(v_\eta^l) +\frac{\tau}{4 h_\tau} \sum_{k=1}^N  \frac{\bd^2(v_\eta^{Rk-1},v_\eta^{Rk})}{2\eta} \\
 &\le \tau C + \frac{\eta}{2h_\tau} \sum_{l=1}^{RN} h_\tau^{l/R+1} h_\eta^l\nrg(v_\eta^l),
\end{align*}
where we have used the classical estimate \eqref{boundsumdk} and a lower bound for $h_\tau$ in the last inequality.
The second sum in $I_2$ is estimated as follows, using that $h^k_\tau \geq h_\tau^{l/R+1}$ for $R(k-1)<l\le Rk$:
\begin{align*}
 \eta\sum_{k=1}^{N} \sum_{l=R(k-1) +1}^{Rk} \sum_{n=1}^{l}h_\tau^k h_\eta^l g_\eta^{l-n} \frac{g_\eta}{h_\eta}\nrg(v_\eta^n)
 & \geq \eta \sum_{l=1}^{RN} \sum_{n=1}^{l} h_\tau^{l/R+1} h_\eta^{l} g_\eta^{l-n+1} \nrg(v_\eta^n)\\
 & = \eta g_\eta \sum_{l=1}^{RN} \nrg(v_\eta^l) h_\tau^{l/R+1} h_\eta^{l} \sum_{n=0}^{RN-l} \big(h_\tau^{1/R} h_\eta g_\eta\big)^n \\
 & = \eta g_\eta \sum_{l=1}^{RN} \nrg(v_\eta^l) h_\tau^{l/R+1} h_\eta^{l} \frac{1- (h_\tau^{1/R} h_\eta g_\eta)^{RN-l+1}}{1-h_\tau^{1/R} h_\eta g_\eta}.
\end{align*}
Substituting these estimates into the expression for $I_2$ yields a single sum,
\begin{align*}
 I_2 \le \tau C
 + \eta \sum_{l=1}^{RN} h_\tau^{l/R+1} h_\eta^l\nrg(v_\eta^l)
 \left[\frac1{2h_\tau} - \frac{g_\eta}{1-h_\tau^{1/R} h_\eta g_\eta} + \frac{g_\eta^{RN-l+1}}{1-h_\tau^{1/R} h_\eta g_\eta}\right].
\end{align*}
Arguing similarly as in the derivation of \eqref{eq:elementary},
we estimate
\begin{align*}
 h_\tau^{1/R}h_\eta g_\eta
 \geq \left(1+\lambda \frac{\tau}{R}\right)(1+\lambda \eta)\left(\frac13+\frac{\lambda \eta}9\right)
 \ge \frac{1}{3} (1+\lambda \eta)^3 \geq \frac{1}{3} (1+3 \lambda \eta),
\end{align*}
and consequently,
\begin{align*}
\frac1{2h_\tau} - \frac{g_\eta}{1-h_\tau^{1/R}h_\eta g_\eta} & \le \frac12 (1- 2 \lambda \tau) - \frac13 (1+\frac13\lambda \eta)\frac{3}{2}(1+\frac32\lambda \eta) \le \frac12 -  \lambda \tau - \frac12 - \frac{11}{6}\lambda \eta \le -3 \lambda \tau.
\end{align*}
In conclusion, we obtain with the help of \eqref{BoundEk} that
\begin{align*}
 I_2 \le C\tau + \eta\sum_{l=1}^{RN}\nrg(v_\eta^l)\left[\frac32g_\eta^{RN+1-l}+3(-\lambda)\tau\right] 
 \le C\tau + \frac{9C}4\eta + (-\lambda)3\tau \eta  RN
 \le C\tau + \frac{9C}4\eta + 3(-\lambda)T \tau.
\end{align*}
Collecting all terms, we finally obtain the desired estimate \eqref{eq:heart}.


\section{Illustration by Numerical Experiments}\label{sec:Num}
In this section, we illustrate the convergence of our variational BDF2 method in comparison to the implicit Euler scheme in several numerical experiments.
As examples, we have chosen 
a flow on the two-dimensional sphere $\mathbb S^2$, 
a reaction-diffusion equation as flow on the Hilbert space $L^2([0,1])$,
and an aggregation-diffusion equation as flow on the space $\mathcal P([-1,1])$ of probability measures on $[-1,1]$, 
equipped with the the $L^2$-Wasserstein distance $\wass$.
We observe that the order of convergence is indeed very close to two in each of our simulations.
This underlines our philosophy that one reaches the optimal order in ``typical'' problems,
despite the fact that our main Theorem \ref{thm:mainthm} only provides order one-half, 
and that there are specific counter-examples with sub-optimal converge rates, 
like in Remark \ref{rmk:counter}. \\

In each of the examples below, 
we compare the numerical results for the implicit Euler scheme and for the BDF2 method at various moderately small time steps $\tau>0$ 
to a reference solution that is obtained by the BDF2 method with a very small time step $\tau_\text{ref}$.
The approximation with the implicit Euler method of step size $\tau>0$ 
--- see Section \ref{subsec:MMS} for details --- 
is denoted by $\bar{u}_\tau^{(1)}$, and the approximation with BDF2 by $\bar{u}_\tau^{(2)}$, respectively. 
For the time-discrete initial data, we choose the original datum $u^0$ for both schemes at $t=0$, 
and  for the second initial datum (at $t=\tau$) of the BDF2 method,
we use the result of the first step of the implicit Euler scheme. This choice ensures in the ODE setting the enhanced convergence rate of order two, since the startup calculation with one step of the implicit Euler scheme is of order two \cite[Theorem 7.23]{BD}. \\

The numerical rate of convergence is then computed as follows.
In addition to the very small reference time step $\tau_\text{ref}$, we choose a moderately large time step $\tau_\text{coarse}$ that is an integer multiple of $\tau_\text{ref}$.
Then, we calculate  $\bar{u}_\tau^{(1)}$ and  $\bar{u}_\tau^{(2)}$ for several intermediate time steps $\tau\in(\tau_\text{ref},\tau_\text{coarse})$ 
that are chosen such that $\tau$ is an integer multiple of $\tau_\text{ref}$, 
and $\tau_\text{coarse}$ is an integer multiple of $\tau$.
For each such choice of $\tau$, 
the respective solutions $\bar{u}_\tau^{(1)}$ and  $\bar{u}_\tau^{(2)}$ are compared to the reference solution $\bar{u}_\text{ref}^{(2)}$:
specifically, we calculate a mean numerical error 
by taking the average of the distances $\bd(\bar{u}_\tau^{(i)}(t_k),u_\text{ref}(t_k))$ at times $t_k=k\tau_\text{coarse}\in[0,T]$ on the coarsest grid. \\

All simulations have been performed with \texttt{MATLAB}.
Both variational schemes are implemented by solving the sequence of variational problems  
using the built-in function \texttt{fmincon}.

\subsection{Gradient Flow on the Sphere $\mathbb{S}^2$}\label{subsec:Sphere}
The first test problem is placed on the unit 2-sphere $\mathbb{S}^2$ equipped with the intrinsic (great-circle) distance $\bd_{\mathbb{S}^2}$, 
\begin{align*}
 \X:=\mathbb{S}^2=\{u\in\R^3\,|\,u_1^2+u_2^2+u_3^2=1\}\subset\R^3, \qquad  \bd_{\mathbb{S}^2}(u,v)=\arccos(u_1v_1+u_2v_2+u_3v_3).
\end{align*}
For the potential $\nrg:\mathbb{S}^2\to\R$, we choose the restriction of
\begin{align*}
  \tilde\nrg(u)= \sum_{i=1}^3 \big(u_i -\frac12\big)\big(u_i+\frac12\big)^2.
\end{align*}
The corresponding gradient flow satisfies the ODE
\[\dot u=-\nabla_{\mathbb{S}^2}\nrg(u) = \Pi_u\big[-\nabla\tilde\nrg(u)\big],\]
where $\Pi_u[v]=v-u^Tv$ is the projection of a vector $v$ to the tangent space of $\mathbb{S}^2$ at $u$.
Its flow lines are sketched in Figure \ref{fig:sphere} (left).
The example falls into the class of gradient flows on Riemannian manifolds that is covered by Theorem \ref{thm:diffgeo}.

A series of simulations has been performed for the initial datum
\begin{align*}
  u^0 = \frac1{\sqrt{30}}  (1,2,5)
\end{align*}
and the reference step size $\tau_{ref}=10^{-5}$.
The observed numerical convergence rates are 1.00 for the implicit Euler method, and 2.06 for BDF2,
see Figure \ref{fig:sphere} (right).
Further experiments with different initial data and other potentials yield very similar results.
In this smooth, finite dimensional setting, second order convergence of the BDF2 method was naturally expected.

\begin{figure}[h]
  \label{fig:sphere}
  \includegraphics[width=0.5\textwidth]{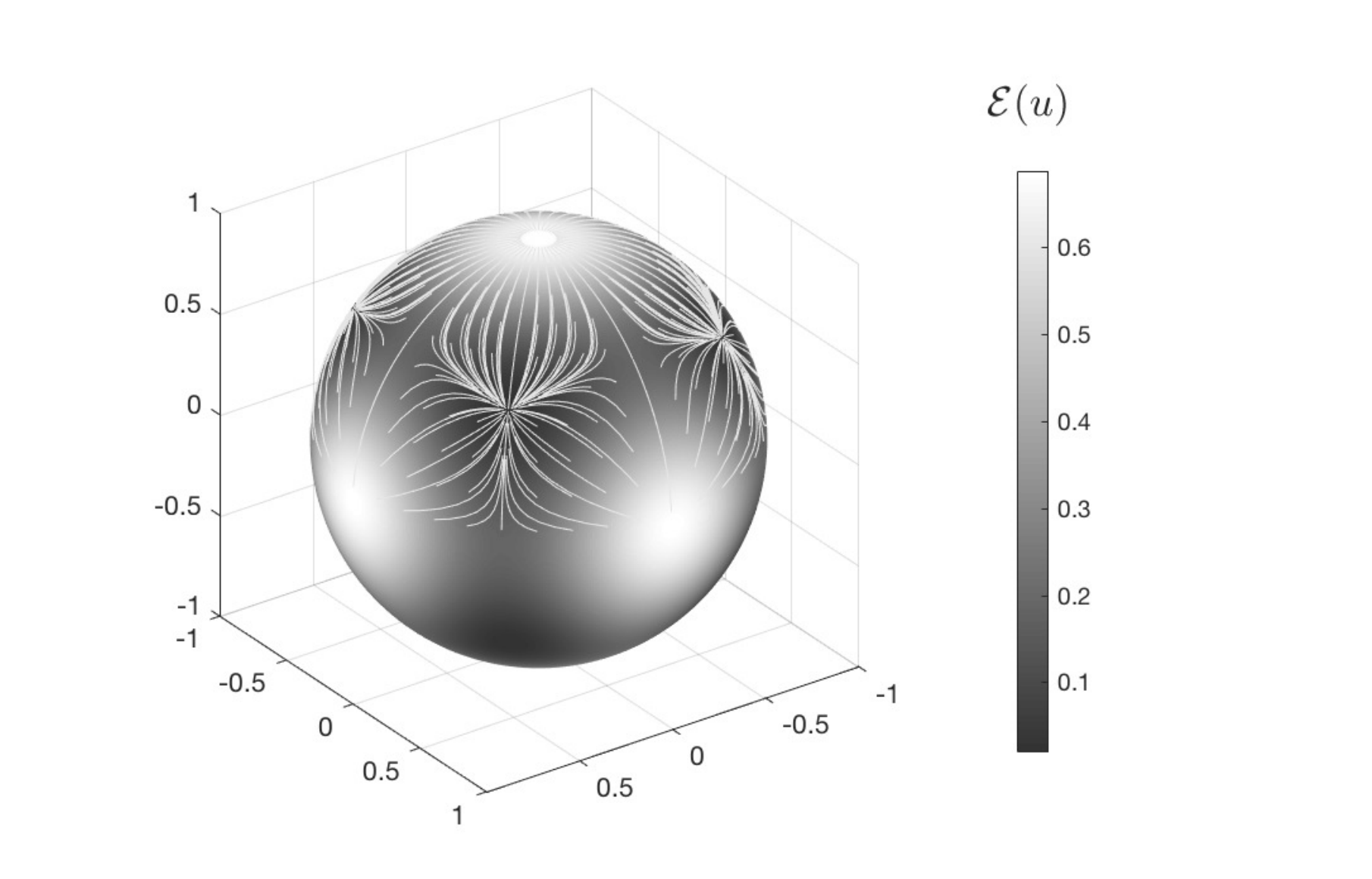}%
\includegraphics[width=0.5\textwidth]{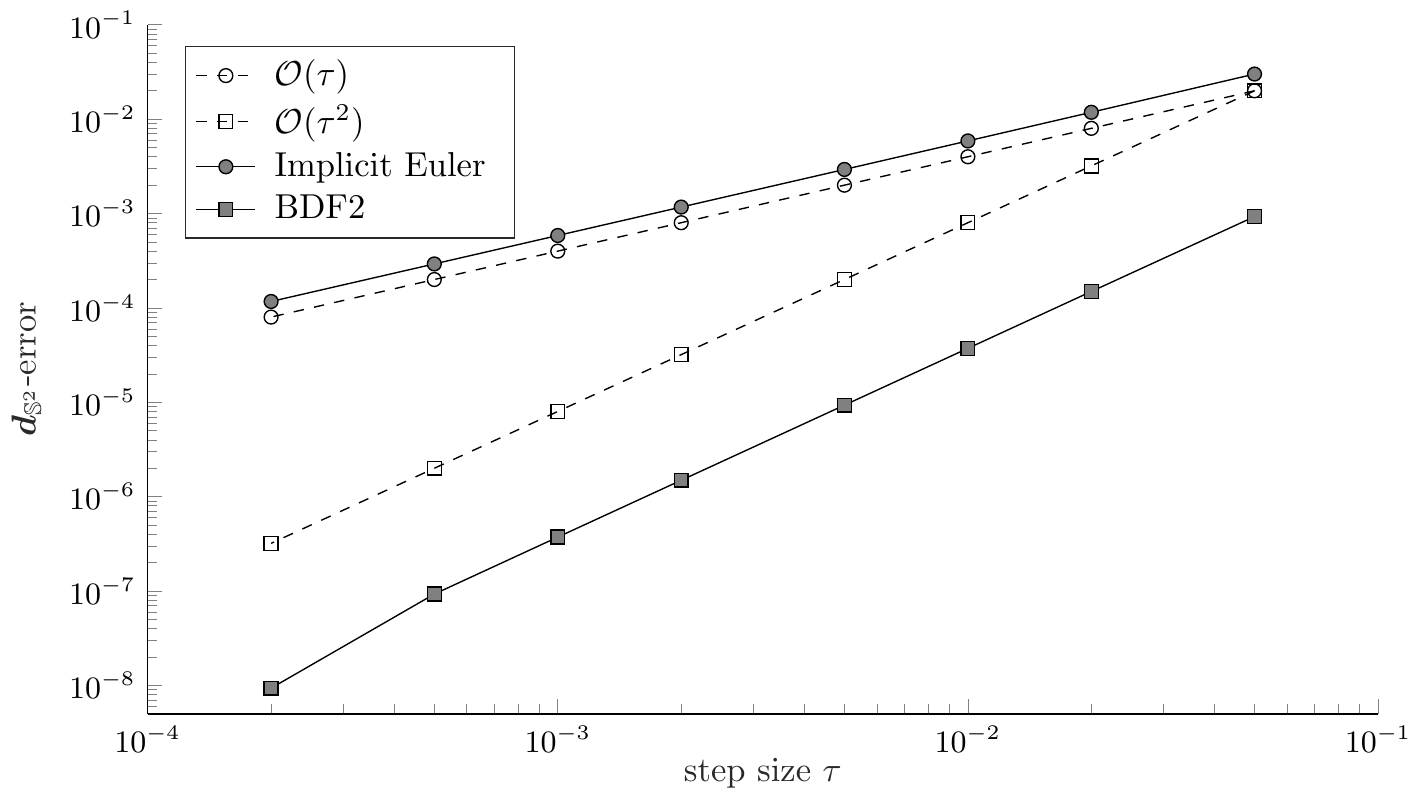}
\caption{\emph{Gradient flow on the Sphere $\mathbb{S}^2$}.
  Left: the values of $\nrg$ are color-coded by gray scale.
  The white lines are sample trajectories of the gradient flow generated by $\nrg$. 
  Right: the $\bd_{\mathbb{S}^2}$-error plot of $\bar{u}_\tau^{(i)}$ compared  with  $\bar{u}_{\tau_{ref}}$.}
\end{figure}

\subsection{Reaction-Diffusion Equation with Obstacle}\label{subsec:ReaDif} 
Next, we consider the constrained reaction-diffusion equation 
\begin{align*}
  \partial_t u&= \Delta u + 60 u^3 \quad
  \text{subject to } \quad   \left|u\right|\leq 1 
\end{align*}
on $\Omega=\left[0,1\right]$. 
This PDE constitutes a gradient flow on the Hilbert space $L^2([0,1])$ for the energy 
\begin{align*}
  \nrg(u)= \begin{cases} 
    \frac12\int_0^1 \big(\partial_xu(x) \big)^2 \mathrm{d} x - 15 \int_0^1 u(x)^4 \mathrm{d} x, 
    & \mathrm{for} \ u\in H^1([0,1]),\ \left|u\right|\leq 1, \\
    + \infty, & \mathrm{otherwise}. \end{cases} 
\end{align*}
The second variation of $\nrg$ amounts to
\begin{align*}
  \mathrm{D}^2\nrg(u)[\varphi]^2 
  = \int_0^1 \big(\partial_x\varphi(x) \big)^2 \mathrm{d} x - 180 \int_0^1 u(x)^2\varphi(x)^2 \mathrm{d} x
  \ge -180 \|\varphi\|^2_{L^2},
\end{align*}
since $0\le u(x)^2\le1$.
Hence $\nrg$ is uniformly semi-convex of modulus $\lambda=-90$. \\

For numerical approximation, we first perform the implicit Euler or BDF2 method for discretization in time,
then we apply a spatial discretization of the PDE, using central finite differences.
The qualitative behavior of the approximate solution for the initial condition
\begin{align*}
  u_0(x)  = \frac12 \sin(2 \pi x) + \frac14
\end{align*}
has been plotted in Figure \ref{fig:obstacle} (left).
Notice that the upper barrier is hit after a short transient time. 
The reference step size is $\tau_\text{ref}= 10^{\minus 6}$.
Since we are interested in the convergence rate of the temporal discretization for the PDE,
we need to estimate the influence of the additional spatial discretization on the numerical error.
For that reason, the experiment is carried out with different choices of the spatial resolution,
using $K=50, 100, 250, 500, 1000$ grid points.\\

Our results on the numerical error are given in Figure \ref{fig:obstacle} (right).
The error curve for the implicit Euler scheme is proportional to $\tau$, as expected.
For time steps $\tau>10^{\minus 5}$, the error curve for the BDF2 scheme is almost perfectly proportional to $\tau^2$,
and there is no significant dependence on the spatial discretization.
For very small steps $\tau\le10^{\minus 5}$, 
there is apparently an additional contribution to the numerical error due to the spatial discretization,
however as $K$ is increased, the error curve extends its approximate proportionality to $\tau^2$ also into that regime.
This is a strong indicator that for a purely temporal discretization by BDF2, the order of convergence is indeed quadratic in $\tau$. 
We performed further experiments with different initial data, and with variants of the energy functional. 
The results remain approximately the same.
\begin{figure}[h]
  \includegraphics[width=0.5\textwidth]{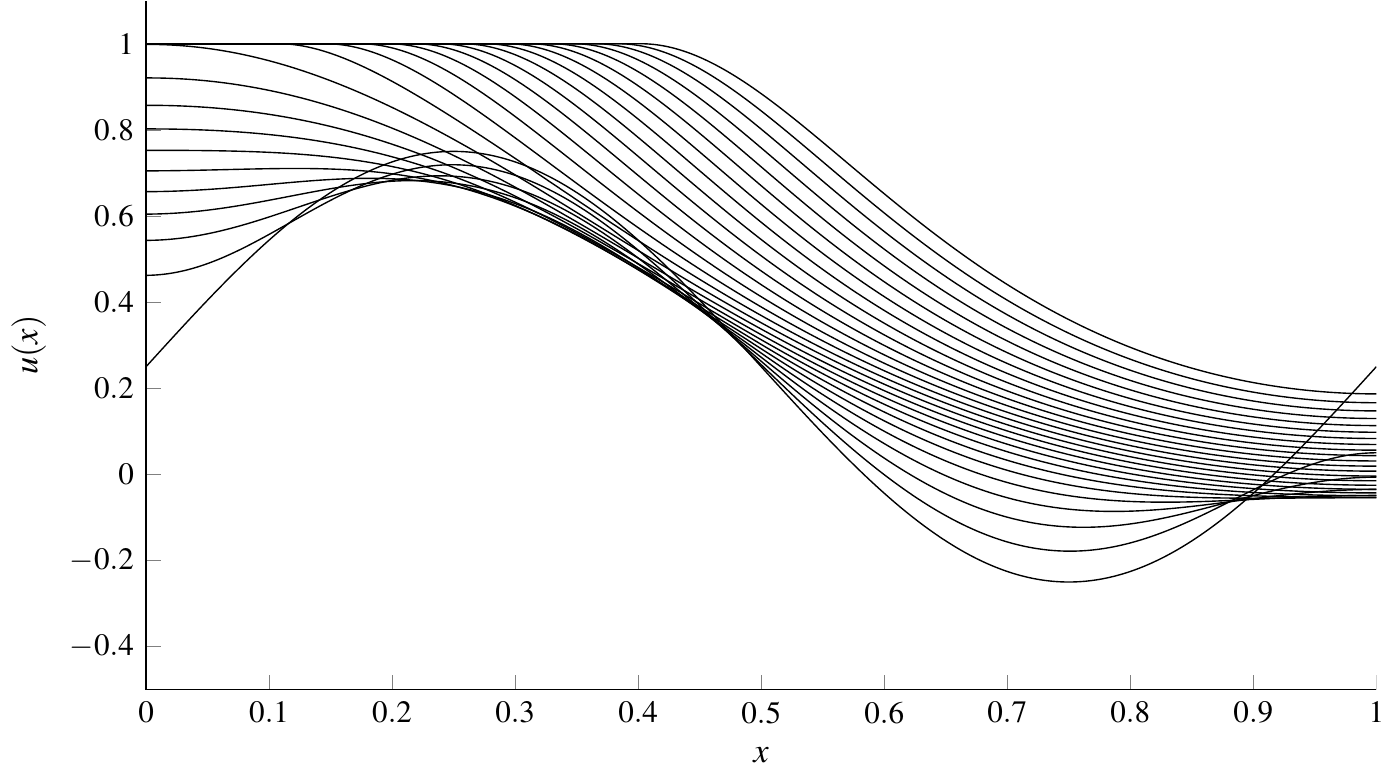}%
  \includegraphics[width=0.5\textwidth]{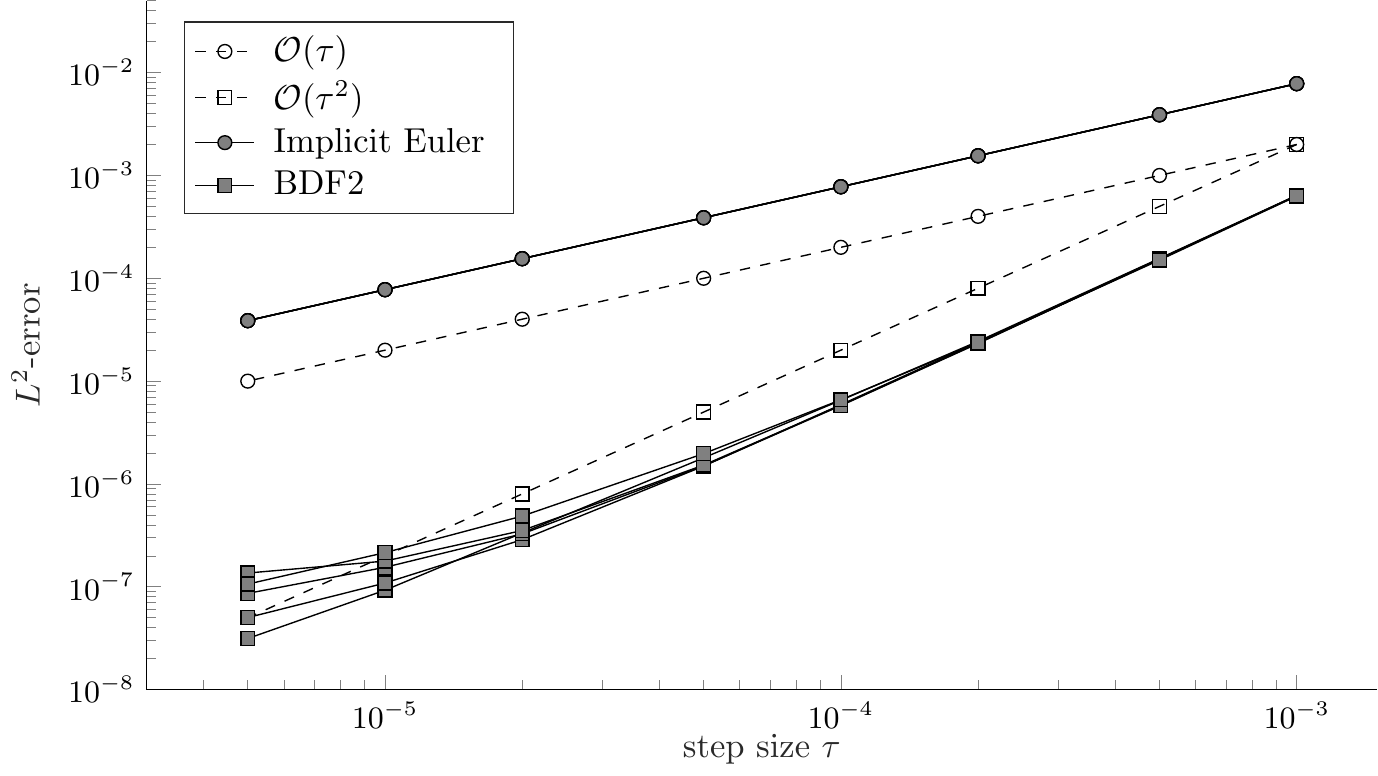}
  \caption{{\emph{Reaction-diffusion equation with obstacle}}: Evolution of the reference solution $\bar{u}_{\tau_{ref}}$ (left). The $L^2$-error plot of $\bar{u}_\tau^{(i)}$ compared with $\bar{u}_{\tau_{ref}}$ for different $K$ (right).}
  \label{fig:obstacle}
\end{figure}

\subsection{Aggregation-Diffusion Equations}\label{subsec:Aggeq}
In our last example,
we study discretizations of the following aggregation-diffusion equation on $\Omega=[-1,1]$:
\begin{align}
  \label{eq:aggr}
  \partial_t u = \Delta u + \partial_x(u W'\ast u).
\end{align}
For the interaction kernel, we use $W(x)= 2x^4-x^2$. 
Weak solutions to \eqref{eq:aggr} conserve mass and positivity, 
so we may restrict attention to solutions $u$ that are probability densities.
Under this restriction, solutions to \eqref{eq:aggr} correspond to the gradient flow 
on the space $\X=\mathcal{P}([-1,1]) $ of probability measures $\mu$ 
with respect to the $L^2$-Wasserstein distance $\bd=\wass$ 
for the energy functional 
\begin{align*}
  \nrg(\mu) := \begin{cases} 
    \int_\Omega u(x) \log(u(x)) \mathrm{d} x + \frac12 \iint_{\Omega\times\Omega} u(x)W(x-y)u(y) \mathrm{d} y \mathrm{d} x, 
    & \text{if $\mathrm\mu(x)=u(x)\mathrm dx$ with density $u\in L^1(\Omega)$}, \\
    + \infty, 
    & \mathrm{otherwise}. 
  \end{cases}
\end{align*}
For numerical simulation, we employ the isometry of the the Wasserstein space $(\mathcal{P}(\Omega),\wass)$
and the space $\tilde\X$ of non-decreasing c\`{a}dl\`{a}g functions $X:[0,1]\to\Omega$,  equipped with the $L^2([0,1])$-norm. 
This isometry is realized by assigning to each $\mu$ its inverse distribution function $X_\mu$, i.e.,
$X_\mu$ is the unique function in $\tilde\X$ with
\[ \xi = \int_0^{X_\mu(\xi)} 1 \ \mathrm d \mu(x) \quad \text{for all $\xi\in[0,1]$}. \]
Accordingly, the Wasserstein gradient flow transforms into an $L^2([0,1])$-gradient flow on $\tilde\X$ 
with the energy functional 
\begin{align*}
  \tilde\nrg(X):= 
  -\int_{[0,1]} \log(\partial_{\xi} X(\xi)) \mathrm{d} \xi + \frac12 \iint_{[0,1] \times [0,1]} W(X(\xi)-X(\eta)) \mathrm{d} \xi \mathrm{d} \eta.
\end{align*}
In the numerical experiments,
we prescribe an initial datum $u_0$ via its inverse distribution function
\begin{align*}
  X_{u_0}(\xi):= 2\xi-1+ \frac{1}{8\pi} \sin(8\pi\xi) \cdot \left( 10(\xi(\xi-0.5)(x-1)) +1 \right). 
\end{align*}
Concerning the discretization in space, we proceed as in the previous example,
using central finite differences on the space $\tilde\X$ with $K=50, 100, 250, 500, 1000$ spatial grid points. 
The qualitative behavior of the reference solution (in original variables with $\tau_{ref}=10^{\minus 6}$, and $K=1000$) 
is sketched in Figure \ref{fig:aggr} (left). \\
 
Our results on the numerical error are given in Figure \ref{fig:aggr} (right).
The error curves for the implicit Euler and the BDF2 schemes, respectively, 
are almost perfectly proportional to $\tau$ and $\tau^2$.
The results are comparable to (and even better than in) the previous example; 
we do not observe any significant effect of the spatial discretization, even for very small time steps.
This indicates that the purely temporal discretization of the original PDE with BDF2 leads an approximation error $\tau^2$.
\begin{figure}[h]
  \includegraphics[width=0.5\textwidth]{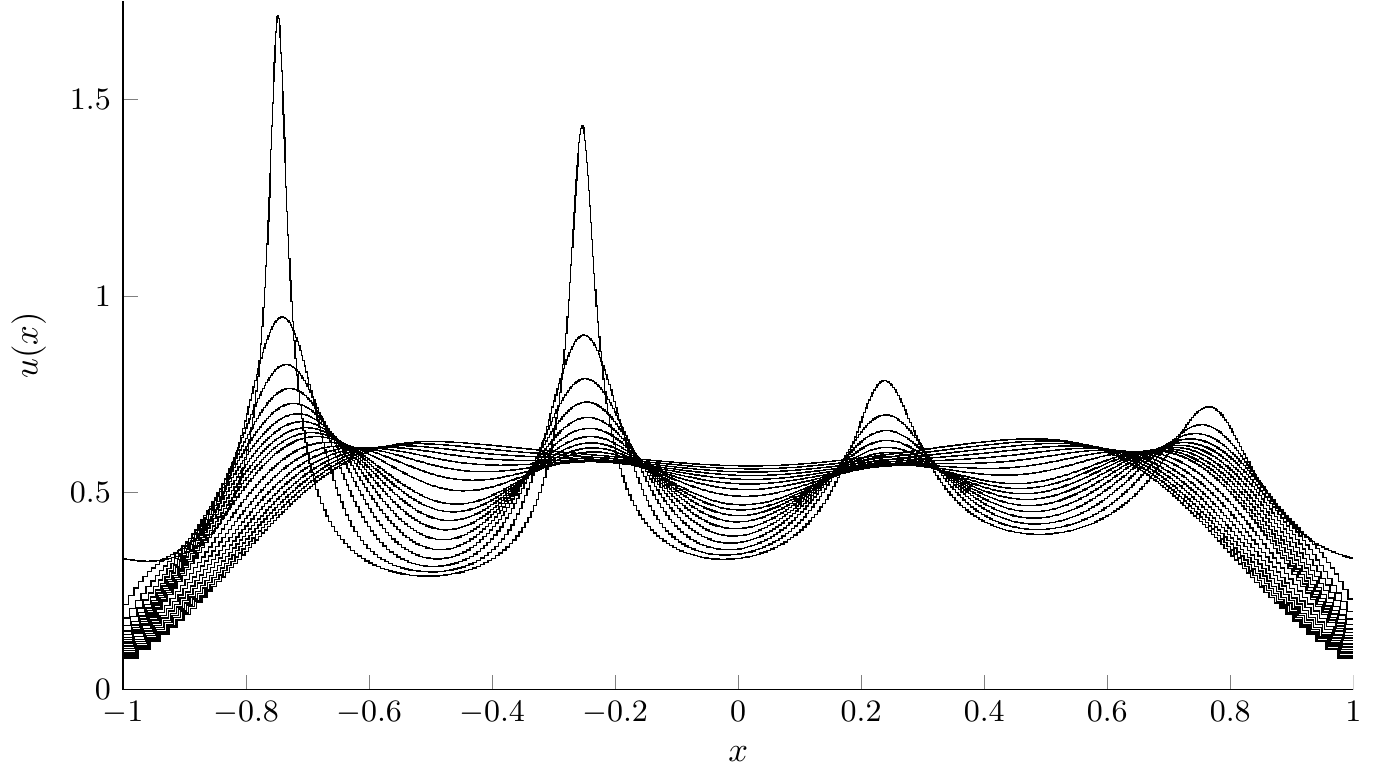}%
  \includegraphics[width=0.5\textwidth]{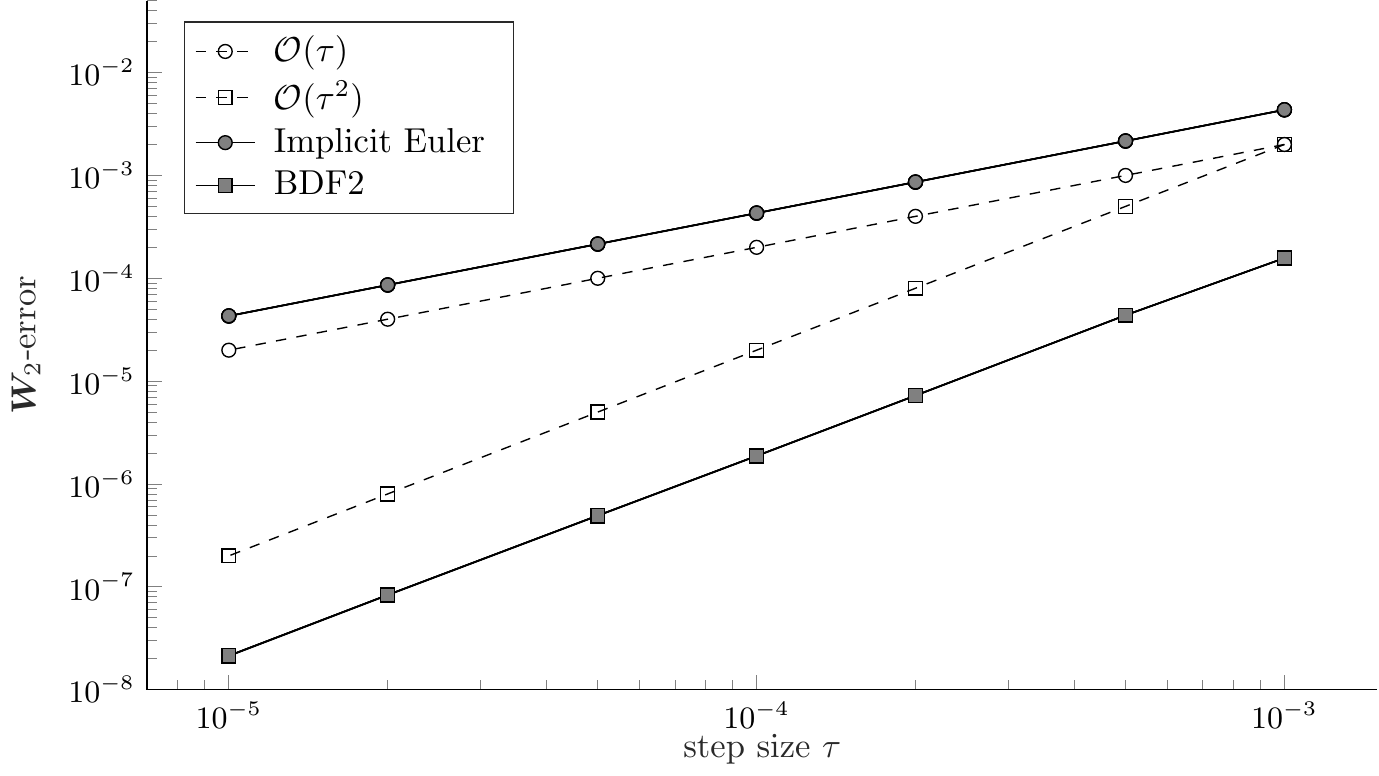}
  \label{fig:aggr}
  \caption{{\emph{Aggregation-diffusion equation}}: Evolution of the reference solution $\bar{u}_{\tau_{ref}}$ (left) and  the $\wass$-error plot of $\bar{u}_\tau^{(i)}$ compared  with $\bar{u}_{\tau_{ref}}$ for different $K$ (right).}
\end{figure}


\appendix

\section{Cross curvature}
\label{apx:cross}
In this appendix, we define the term \emph{cross curvature} that has been used in hypothesis (KM3) in Section \ref{sct:diffgeo},
and we briefly discuss its significance in the proof of \cite[Corollary 2.11]{kim2012towards},
which we have been used for showing Theorem \ref{thm:diffgeo}.
Starting from the Riemannian manifold $(\mf,\g)$, 
one defines a further manifold $\smf$ with indefinite metric tensor $\sg$ as follows:
\begin{align*}
  \smf = \{ (x,y)\,|\,x,y\in\mf,\,x\notin\cut(y),\,y\notin\cut(x)\},
  \quad
  \sg_{(x,y)}[(\xi,\eta)]^2 = -\frac12\frac{d}{ds}\bigg|_{s=0}\,\frac{d}{dt}\bigg|_{t=0}\,\bd^2\big(\exp_x(s\xi),\exp_y(t\eta)\big).
\end{align*}
The cross-curvature of $(\mf,\g)$ at the point pair $(x,y)\in\smf$ is defined 
as the tensor $\scurv^{(\smf,\sg)}_{(x,y)}$ of sectional curvatures in $(\smf,\sg)$ at $(x,y)$.
More explicitly, for $\hat z=(\hat v,\hat w),\check z=(\check v,\check w)\in T_{(x,y)}\smf$,
\begin{align*}
  \scurv^{(\smf,\sg)}_{(x,y)}[\hat z,\check z]^2 = \sum_{i,j,k,l} R_{i,j,k,\ell}\hat z^i\check z^j\hat z^k\check z^\ell,
\end{align*}
with $R$ the Riemann curvature tensor of $(\smf,\sg)$ at $(x,y)$.
One says that $(\mf,\g)$ has non-negative cross curvature if, 
for each $(x,y)\in\smf$ and $v\in T_x\mf$, $w\in T_y\mf$,
\begin{align}
  \label{eq:nncross}
  \scurv^{(\smf,\sg)}_{(x,y)}[(v,0),(0,w)]^2\ge0.
\end{align}
The role of $\scurv^{(\smf,\sg)}$ is probably best understood from the following alternative representation 
that has been derived in \cite[Lemma 4.5]{KMold}:
given a curve  $y:(-\epsilon,\epsilon)\to\mf$ and points $x_0,x_1\notin\cut(y(0))$,
then
\begin{align}
  \label{eq:mtw}
  -\frac{d^2}{dt^2}\Big|_{t=0}\,\frac{d^2}{ds^2}\Big|_{s=0}\,\bd^2\big([x_0,x_1;y_0]_t,y(s)\big)
  = \scurv^{(\smf,\sg)}_{(x_0,y_0)}[(v,0),(0,w)]^2,
\end{align}
where $y_0:=y(0)$, $w:=\dot y(0)\in T_{y_0}\mf$, and $v\in T_{x_0}\mf$ the derivative of $t\mapsto[x_0,x_1;y_0]_t$ at $t=0$.
The crucial point here is that the fourth order mixed derivative on the left-hande side only depends on $v$ and $w$,
without any requirement on $y$'s second derivative, like $y$ being a segment.

The representation \eqref{eq:mtw} is also the key ingredient to the proof of \cite[Corollary 2.11]{kim2012towards},
which we have built upon in Section \ref{sct:diffgeo}.
The cited corollary states the following:
under the hypotheses (KM0)--(KM3) given in Section \ref{sct:diffgeo} 
--- including non-negative cross-curvature ---
the function defined in \eqref{eq:helpf} is convex, for any choice of $u,v,\gamma_0,\gamma_1\in\mf$.
To prove this,
Kim and McCann consider for each $s\in[0,1]$ the segment $z^{(s)}_{(\cdot)}:=[u,v;\gamma_s]_{(\cdot)}$ connecting $u$ to $v$ with base $\gamma_s$,
and define the extended function $F:[0,1]\times[0,1]\to\R$ with
\begin{align}
  \label{eq:mccann}
  F(s,t) = \bd^2(u,\gamma_s) - \bd^2(z^{(s)}_t,\gamma_s).
\end{align}
In these notations, the claim is that $\partial_s^2F(s,1)\ge0$ for all $s\in[0,1]$.
By definition of $F$, it is clear that $\partial_s^2F(s,0)=0$.
A direct calculation in local coordinates shows that $\partial_t\partial_s^2F(s,0)=0$ as well.
Finally, thanks to non-negative cross-curvature, 
it follows from \eqref{eq:mtw} that $\partial_t^2\partial_s^2F(s,t)\ge0$, for each $s,t\in[0,1]$.
Notice that $\gamma$ plays the role of the ``general'' curve $y$, whereas $z$ is of the required exponential form.
This concludes the proof.
\begin{rmrk}
  It turns out that $\scurv^{(\smf,\sg)}$ is identical to the celebrated \emph{MTW tensor},
  which is usually defined via the expression on the left-hand side of \eqref{eq:mtw}.
  The weak regularity condition of Ma, Trudinger and Wang \cite{ma2005regularity}
  is \eqref{eq:nncross}, but only for those $v,w\in T_{x(0)}\mf$ with $\g_{x(0)}[v,w]=0$.
  It constitutes a necessary condition for the regularity of optimal transport maps on $\mf$ with cost function $c(x,y)=\frac12\bd^2(x,y)$.  
\end{rmrk}


\bibliographystyle{abbrv}
   \bibliography{thlit}

\end{document}